\numberwithin{equation}{section}
\newtheorem{theorem}{Theorem}[section]
\newtheorem*{theorem*}{Theorem}
\newtheorem{lemma}{Lemma}[section]
\newtheorem{definition}{Definition}[section]
\newtheorem{remark}{Remark}[section]
\newtheorem{proposition}{Proposition}[section]
\newtheorem*{proposition*}{Proposition}
\def\C{{\mathbb C}}
\def\D{{\mathbb D}}
\def\E{{\mathbb E}}
\def\N{{\mathbb N}}
\def\R{{\mathbb R}}
\def\J{{\mathcal J}}
\def\I{{\mathcal I}}
\def\O{{\mathcal O}}
\def\cC{{\mathrm C}}
\def\eps{\varepsilon}
\def\l{\ell}
\def\f{f}
\def\dis{\displaystyle}
\newcommand{\ztp}{(0,2 \pi)}
\newcommand{\Envtwo}{\E [ N_{n} \ztp ]}
\newcommand{\asntoinfty}{\quad\mbox{as } n\to\infty}
\newcommand{\asmtoinfty}{\quad\mbox{as } m\to\infty}
\newcommand{\zk}{\zeta_{k}}
\newcommand{\ul}{u_{\l}}
\newcommand{\fim}{\varphi_m(x)}
\newcommand{\fimp}{\varphi'_m(x)}
\newcommand{\as}{A_n^*}
\newcommand{\bs}{B_n^*}
\newcommand{\cs}{C_n^*}
\newcommand{\abs}[1]{\left\lvert #1 \right\rvert}
\newcommand{\pushright}[1]{\ifmeasuring@#1\else\omit\hfill$\displaystyle#1$\fi\ignorespaces}
\newcommand{\pushleft}[1]{\ifmeasuring@#1\else\omit$\displaystyle#1$\hfill\fi\ignorespaces}
\begin{document}

\title[Real zeros of random trigonometric polynomials with periodic coefficients]{Real zeros of random  trigonometric polynomials \\ with $ \ell $-periodic coefficients}

\author{Ali Pirhadi}
\address{Department of Mathematics and Statistics, Georgia State University, Atlanta, GA 30303, USA}
%
\email{apirhadi@gsu.edu}





\keywords{random trigonometric polynomials, dependent coefficients, expected number of real zeros, periodic coefficients}

\subjclass[2020]{26C10, 30C15, 42A05}

\begin{abstract}
The large degree asymptotics of the expected number of real zeros of a random trigonometric polynomial $$ T_n(x) = \sum_ {j=0} ^{n} a_j \cos (j x) + b_j \sin (j x), \quad x \in (0,2\pi), $$
with i.i.d.~real-valued standard Gaussian coefficients is known to be $ 2n / \sqrt{3} $. 
In this article, we consider quite a different and extreme setting on the set of the coefficients of $ T_n $.
We show that a random trigonometric polynomial of degree $ n $ with  $ \l $\noindent-periodic i.i.d.~Gaussian coefficients is expected to have significantly more real zeros compared to the classical case with i.i.d.~Gaussian coefficients.
More precisely, the expected number of real zeros of $ T_n $ is  proportional to $ n $ with a proportionality constant $  \cC_{\l,r} \in (\sqrt{2},2] $, which is explicitly represented by a double integral formula.
The case $ r=0 $ is marked as a special one since in such a case $ T_n $ asymptotically obtains the largest possible number of real zeros.
\end{abstract}

\makeatletter
\renewcommand\subsection{\@startsection{subsection}{2}%
	\z@{.5\linespacing\@plus.7\linespacing}{.5em}%
	{\normalfont\scshape}}
\makeatother
\maketitle



\section{Introduction} \label{P3Sec1}
The study of zeros and level crossings of random functions, which is an interesting topic in mathematics and statistics, has a long history of almost a century. 
Bloch and P\'olya \cite{BP1931} were the first to study the expected number of real zeros of a polynomial with random coefficients.
Later, Littlewood and Offord gave explicit bounds for  the number of real zeros of random (algebraic) polynomials with uniform, standard normal, and Bernoulli coefficients, see \cite{LO1938, LO1939, LO1943}.
Ever since, there has been continual interest in the study of asymptotic estimate of the number of real zeros of random algebraic/trigonometric polynomials and the way these roots are distributed in the complex plane. 
In the case of random polynomials, the pioneering work of Kac \cite{Kac1943} showed that the expected number of real zeros of polynomials with i.i.d. standard normal coefficients is asymptotically $ (2/\pi)  \log n$. More precisely,
\begin{equation}\label{real-expected-Kac}
\E [N_n(\R)] = \dfrac{2}{\pi} \log n+ o(\log n), \asntoinfty,
\end{equation}
where $N_n(S) $, in general, denotes the number of real roots of a random function (in this case a random polynomial) in a set $ S \subset \R $. 
The asymptotic relation \eqref{real-expected-Kac} was later studied and enhanced by other mathematicians, including Hammersley \cite{H1956}, Ibragimov and Maslova \cite{IM1971,IM1971-2}, Wilkins \cite{Wil1973}, Edelman and Kostlan \cite{EK1995}, and recently H.~Nguyen, O.~Nguyen and Vu \cite{NNV2016}.

A key step in investigating the expected real zeros of random functions (in general) with centered Gaussian entries has been the renowned Kac-Rice formula, first introduced in the 1940's by Kac \cite{Kac1943} and independently by Rice \cite{Ric1944}, while studying random polynomials. 
In this article, we stick to one out of many different, but more or less equivalent, versions of Kac-Rice's formula, which was proved by Lubinsky, Pritsker and Xie \cite{LPX2016}.
\begin{proposition*}[Kac-Rice Formula] 
\normalfont
Let $ [a, b] \subset \R ,$ and consider real-valued functions $ f_{j}(x) \in C^1 [a, b] $,  $ 0 \leqslant j \leqslant n $. Define the random function
$ F_n (x) = \sum _{j=0} ^{n} a_{j} f_{j}(x),  $
where the $a_j$ are i.i.d.~random variables with Gaussian distribution $ \mathcal{N}(0,\sigma^{2}) $ and the auxiliary functions
\begin{align*}
& A_n(x) := \sum _{j=0} ^{n} (f_{j} (x))^2,
& B_n(x) := \sum _{j=0} ^{n} f_j (x) f'_{j} (x) ,
& & \text{and}  &
& C_n(x) := \sum _{j=0} ^{n} (f'_{j} (x))^2.
\end{align*}
If $ A_n(x)>0 $ on $ [a,b] ,$ and there is $ M \in \N $ such that $ F'_{n}(x) $
has at most $ M $ zeros in $ (a, b) $ for all choices of
coefficients, then the expected number of real zeros of $ F_n(x) $ in the interval $ (a, b) $ is given by
\begin{equation*}  \label{Kac}
\E [ N_{ n }  ( a , b ) ] 
= \frac{1}{\pi} \displaystyle \int_{a}^{b}  \dfrac{ \sqrt{A_n(x)C_n(x)-B_n(x)^2} }{A_n(x)} \, dx. \tag{\(\ast\)}
\end{equation*}
\end{proposition*}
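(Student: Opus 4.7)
The plan is the standard approach via a Kac counting identity. The key ingredient is that for every $C^1$ function $F$ on $[a,b]$ whose zeros in $(a,b)$ are all simple, one has
\begin{equation*}
N(F;(a,b)) \;=\; \lim_{\eps \to 0^+} \dfrac{1}{2\eps} \int_a^b \mathbbm{1}_{\{|F(x)| < \eps\}} \, |F'(x)| \, dx.
\end{equation*}
The first step is to verify that $F_n$ almost surely meets the simple-zero requirement. Since $A_n(x) > 0$, the Gaussian random variable $F_n(x)$ has a density, so $\P(F_n(x)=0) = 0$ for each fixed $x$, and this combined via Fubini with the uniform bound $M$ on the zeros of $F'_n$ in $(a,b)$ forbids $F_n$ and $F'_n$ from vanishing simultaneously almost surely.

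The second step is to take expectation on both sides, interchange $\E$ with $\int_a^b dx$ via Fubini, and then pass the $\eps \to 0^+$ limit inside the $x$-integral. This last interchange is the main technical point of the proof: it is justified by the dominated convergence theorem, where the deterministic bound $N_n(a,b) \le M+1$ (a direct consequence of the hypothesis that $F'_n$ has at most $M$ zeros) supplies the key uniform envelope. The outcome is that the problem reduces, pointwise in $x$, to the identity
\begin{equation*}
\lim_{\eps \to 0^+} \dfrac{1}{2\eps}\, \E\bigl[ \mathbbm{1}_{\{|F_n(x)| < \eps\}} \, |F'_n(x)| \bigr] \;=\; \int_{\R} |v| \, p_{(F_n(x),F'_n(x))}(0,v) \, dv,
\end{equation*}
where $p_{(F_n(x),F'_n(x))}$ is the joint density of the Gaussian pair.

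The final step is the explicit Gaussian computation. The vector $(F_n(x), F'_n(x))$ is centered Gaussian in $\R^2$, a linear combination of the i.i.d.~$\mathcal{N}(0,\sigma^2)$ coefficients, with covariance matrix
\begin{equation*}
\Sigma(x) \;=\; \sigma^2 \begin{pmatrix} A_n(x) & B_n(x) \\ B_n(x) & C_n(x) \end{pmatrix}.
\end{equation*}
By Cauchy--Schwarz, $\det \Sigma(x) = \sigma^4 (A_n C_n - B_n^2) \geq 0$, with equality only on a set that contributes zero to both sides (since equality would force $F'_n$ to be a scalar multiple of $F_n$, contradicting the simple-zero property). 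On the complement, the joint density at $(0,v)$ is the standard bivariate Gaussian expression, and substituting it into the inner integral and applying the elementary identity $\int_{\R} |v|\, e^{-\alpha v^2}\, dv = 1/\alpha$ with $\alpha = A_n/(2 \sigma^2 (A_n C_n - B_n^2))$ produces, after routine cancellation of the $\sigma$-factors, the integrand $\pi^{-1} \sqrt{A_n C_n - B_n^2}/A_n$. Integrating over $(a,b)$ yields the claimed formula; the only non-routine step in the whole plan is the limit--integral interchange in the second paragraph, which hinges crucially on the uniform bound $M$ on the zeros of $F'_n$.
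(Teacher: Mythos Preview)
The paper does not supply its own proof of this proposition: it is quoted from Lubinsky, Pritsker and Xie \cite{LPX2016} and used throughout as a black box. So there is no in-paper argument to compare your proposal against.

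Your outline is the standard Kac approach and is essentially correct, but two of the interchanges are slightly mis-attributed. The bound $N_n(a,b)\le M+1$ (equivalently, the a.s.\ inequality $\tfrac{1}{2\eps}\int_a^b \mathbbm{1}_{\{|F_n|<\eps\}}|F'_n|\,dx \le M+1$, obtained by splitting $[a,b]$ into at most $M{+}1$ monotonicity intervals of $F_n$) justifies swapping the expectation with the $\eps\to 0$ limit via dominated convergence on the probability space. It does \emph{not}, however, dominate the $x$-integrand $\tfrac{1}{2\eps}\,\E[\mathbbm{1}_{\{|F_n(x)|<\eps\}}|F'_n(x)|]$ pointwise, which is what you need to pull the limit inside $\int_a^b dx$. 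For that second interchange one typically uses the explicit Gaussian density: the integrand is bounded by $\sup_u \int_{\R}|v|\,p_{(F_n(x),F'_n(x))}(u,v)\,dv$, which is continuous in $x$ on $[a,b]$ because $A_n>0$ there. Also, the simple-zero argument needs a touch more care: the zeros of $F'_n$ are random, so ``$\P(F_n(x)=0)=0$ for each fixed $x$'' does not directly apply; one either argues that $\{\exists x:F_n(x)=F'_n(x)=0\}$ lies in a Lebesgue-null subset of coefficient space, or conditions on the (at most $M$) critical points. These are refinements rather than gaps; the overall plan is sound.
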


Among the class of random functions, of great importance are random trigonometric polynomials
\begin{equation*}
T_n(x) := \sum_ {j=0} ^{n} a_j \cos (j x) + b_j \sin (j x), \quad x \in (0,2\pi), \end{equation*}
and random cosine polynomials of the form
\begin{equation*}
V_n(x) := \sum_{j=0}^{n} a_j \cos(j x) , \quad x \in \ztp ,
\end{equation*}
where the coefficients are real and chosen at random. It is quite easy to show that these polynomials have $ 2n $ complex roots, counted with multiplicity. When it comes to the number of real zeros of these polynomials, the famed work of Dunnage \cite{Dun1966} showed that for a random cosine polynomial with i.i.d.~standard Gaussian coefficients, 
\begin{equation} \label{Dunnage}
\Envtwo =\frac{2n}{\sqrt{3}} + o(n) , \asntoinfty.
\end{equation}
Further results on the real zeros of random trigonometric polynomials and their asymptotic behavior can be found through the works of Das \cite{Das1968}, Qualls \cite{Qua1970}, Wilkins \cite{Wil1991}, Flasche \cite{Fla2017}, and extensively in the book of Farahmand \cite{Far1998} and the references therein. 

\subsection{\centering Random trigonometric polynomials with dependent coefficients}

There are two main approaches to make the coefficients of a given random trigonometric (cosine) polynomial dependent. One is through the non-trivial correlated coefficients, and the other is to identify suitable, often large enough, number of the coefficients.
Herein, we give some examples for each direction and discuss the resulted asymptotic that has been established in each model. Let us assume that $ (a_j)_{j\geqslant0} $ is a sequence of identically distributed random variables with standard Gaussian distribution, i.e., $ a_j \sim \mathcal{N}(0,1) $. 
Since $ \mu_i=\mu_j=0 $ and $ \sigma_i^2=\sigma_j^2=1 $, it is trivial that the correlation between the coefficients satisfies
\begin{equation*}
\rho_{ij} 
:= \mathrm{corr}(a_i,a_j)
= \dfrac{\mathrm{cov}(a_i,a_j)}{\sigma_i \, \sigma_j}
= \E[a_i a_j].
\end{equation*}
We can indeed correlate the coefficients through a function known as correlation function. Assume that $ \rho: \N \to \R $ is a suitable function and set $ \E[a_i a_j]=\rho(\abs{i-j}) $, $ i \ne j $. 
It is obvious that $ \rho \not\equiv 0 $ corresponds to the case of dependent Gaussian coefficients. The first works in this direction (non-trivial  correlation functions) are due to Sambandham \cite{Sam1978} and Renganathan and Sambandham \cite{RS1984}.
The authors showed that the asymptotic relation \eqref{Dunnage} still remains valid for random cosine polynomials whose coefficients are correlated through either the constant correlation function $ \rho \equiv r $ or the geometric one defined as $ \rho: k \mapsto r^k $, $ r \in (0,1) $.

Angst, Dalamo and Poly \cite{ADP2019} considered a normalized type of random trigonometric polynomials, also known as Qualls' ensemble, of the form
\begin{equation*}
f_n(x) :=\frac{1}{\sqrt{n}}\sum _{j=1} ^{n} a_{j} \cos (j x) + b_{j} \sin (j x) , \quad x \in \ztp ,
\end{equation*}
where the $ a_j $ and $ b_j $ are standard Gaussian with the same correlation function $ \rho $, and they satisfy the independence condition  $ \E[a_i b_j]=0 $.
They proved that under mild assumptions on the spectral function $ \psi_\rho $ associated with $ \rho $, \eqref{Dunnage} indeed holds. 
The first example in this direction that provides an asymptotic expected number of real zeros of $ f_n $ remarkably deviating from the universal asymptotic \eqref{Dunnage} is the work of Pautrel \cite{Pau2020}.
His work concentrates on the expected number of real zeros of $ f_n $ whose coefficients satisfy the correlation function $ \rho: k \mapsto \cos(k\alpha) $, $ \alpha \geqslant 0 $. He proves that for all $ \eps>0 $ and $l\in (\sqrt{2},2] $, there exist $ \alpha=\alpha(l)\geq 0 $, and infinitely many $ n \in \N $, such that
\begin{equation*}
	\abs{\frac{\Envtwo}{n}-l} \leqslant \eps.
\end{equation*}
A recent paper of Angst, Pautrel and Poly \cite{APP2021} has generalized the main result of \cite{ADP2019}.  
The authors have considered Qualls' ensemble where the coefficients are standard Gaussian, correlated through the same $ \rho $ with its associated spectral measure $ \mu_\rho $ whose density component (with
respect to the Lebesgue measure $ \lambda $) is $ \psi_\rho $. Under these assumptions, they have proved that 
\begin{equation*}
\lim_{n \to \infty} \dfrac{\Envtwo}{n}= \dfrac{\lambda\{\psi_\rho=0\}}{\pi\sqrt{2}}+\dfrac{2\pi-\lambda\{\psi_\rho=0\}}{\pi\sqrt{3}},
\end{equation*}
provided that $ \psi_\rho $ is $ C^1 $ with
H\"older derivative on an open set of full measure. As an outcome of this result, see \cite[Corollary~2.1]{APP2021}, it is also shown that as long as $ \psi_\rho >0 $ a.e., one  obtains
\begin{equation*}
	\liminf_{n \to \infty} \dfrac{\E[N_n(a,b)]}{n}\geqslant \dfrac{b-a}{\pi\sqrt{3}}. 
\end{equation*}
The above lower bound in fact supports the conjuncture made in \cite{Pir2021} by this article's author, which claims that $ 2n/\sqrt{3} $ is the least asymptotic number of real zeros one can expect for high degree random trigonometric polynomials with dependent Gaussian coefficients.

One may adopt quite a different approach to the class of dependent coefficients. One reasonable way to make the coefficients of a given random trigonometric (cosine) polynomial dependent is to identify certain number of coefficients while taking advantage of a broad range of choices of identification models. 
The first model we would like to mention is palindromic random cosine polynomials studied by Farahmand and Li \cite{FL2012}.
They assumed that the first half of the coefficients of $ V_n $ are i.i.d. standard Gaussian, and the rest are just their copies identified in a palindromic way, that is, $ a_{j} = a_{ n - j } $. 
Their result that has been corrected in \cite[Appendix,~pp.~101--107]{Pir-diss} reveals that a random cosine polynomial with palindromic Gaussian coefficients obtains approximately $ 36\% $ more expected real zeros than the exemplary one with i.i.d.~Gaussian coefficients.
One can identify \emph{blocks} of Gaussian coefficients (of the same length) instead and still study the number of real zeros of random trigonometric polynomials. 
Two cases of random cosine polynomials with identified blocks were considered in \cite{Pir2020}. 
The first model discussed (identified adjacent blocks) is another example of random cosine polynomials with weakly dependent coefficients meaning that even though the coefficients are not independent, 
 the expected number of real zeros coincides with the universal asymptotic \eqref{Dunnage}. 
On the other hand, the second model, which involves only two identical block of a significantly large size, provides an example of random cosine polynomials with strongly dependent coefficients, namely the expected number of real zeros deviates from the universal asymptotic \eqref{Dunnage}.
More recently, the author has investigated the mean number of real zeros of $ V_n $ whose Gaussian coefficients are grouped in blocks of a fixed length $ \l>1 $ and identified in a palindromic fashion, see \cite[Theorem~2.1]{Pir2021}. 
It turns out that in this model, we have
\begin{equation*}  
	\Envtwo
	=  \frac{2 n}{ \sqrt{3}} \ \mathrm{K}_{\l} + \O (n^{3/4}),  \asntoinfty ,
\end{equation*} 
where 
\begin{align*} 
	& \mathrm{K}_{\l} \notag
	: = \frac{1}{\pi ^2} \displaystyle \int_{ 0 }^{ \pi } \int_{ 0 }^{ \pi /2 }
	\sqrt{1+  \dfrac{3 (1-\ul^2(s)) }{  (1+  \ul(s) \cos( t )) ^2 }} \, ds \, dt ,
\end{align*} 
and 
$ \ul(s) := \sin (\l s)/(\l \sin(s)). $

In this paper, our main goal is to give another example that goes in the direction of the second approach to dependent coefficients while noting that in this case the asymptotic expected number of real zeros still exceeds $ 2n / \sqrt{3} $.
This article is organized as follows: in Section \ref{P3Sec2}, we introduce  the class of $ \l $-periodic coefficients and discuss the expected number of real zeros of high degree random trigonometric polynomials with $ \l $-periodic Gaussian coefficients. Section \ref{P3Sec3} is devoted to the proofs of the results stated in Section \ref{P3Sec2}.

\section{Random trigonometric polynomials with $ \l $-periodic coefficients} \label{P3Sec2}

Our motivation behind studying the number of real zeros of random trigonometric (cosine) polynomials with $ \l $\noindent-periodic coefficients is the classical work of Szeg\H{o} \cite[p.~260]{Rem1998} as stated below.  

\begin{theorem*}[Szeg\H{o}]
Let $ f(z)=\sum_{j=0}^{\infty} a_{j} z^j$ be a power series with only finitely many distinct coefficients. Then either $ \D $ (the open unit disk) is the domain of holomorphy of $ f $ or $ f $ can be extended to a rational function $ \hat{f}(z)=p(z)/(1-z^k) ,$ where $ p(z)\in \C[z] $ and $ k \in \N. $
\end{theorem*}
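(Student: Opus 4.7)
The plan is to split according to the radius of convergence $R$ of $f$, and in the non-trivial case reduce the conclusion to a structural rigidity statement about the coefficient sequence $(a_j)$: that it must be eventually periodic. Because the $a_j$ take only finitely many values they form a bounded sequence, so $R \geq 1$; if $R > 1$, then $a_j \to 0$, and since only finitely many distinct values appear we must have $a_j = 0$ for all $j$ beyond some index $N$. Hence $f$ is a polynomial, and the trivial identity $\hat{f}(z) = ((1-z)f(z))/(1-z)$ displays the required rational form with $k = 1$. So we may assume $R = 1$ and that $\D$ is not the natural domain of holomorphy of $f$, whence there exist an arc $\gamma \subset \partial \D$ and an open neighborhood $U$ of $\gamma$ to which $f$ extends holomorphically.

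I would then aim to show that the coefficient sequence $(a_j)$ is eventually periodic. Once established, there exist $k \in \N$ and $N \geq 0$ with $a_{j+k} = a_j$ for every $j \geq N$; consequently $(1 - z^k) f(z)$ has only finitely many nonzero Taylor coefficients, so it equals a polynomial $p(z)$, yielding $\hat{f}(z) = p(z)/(1 - z^k)$ immediately. The task therefore reduces to deriving eventual periodicity from the two ingredients at hand: finiteness of the value set $\{c_1, \ldots, c_m\}$ and analytic continuation across $\gamma$.

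To implement this I would combine two classical results after a preliminary arithmetic reduction. Let $e_1, \ldots, e_r$ be a $\Q$-basis of the $\Q$-span of $\{c_1, \ldots, c_m\}$ in $\C$; expanding $c_i = \sum_k q_{ik} e_k$ with $q_{ik} \in \Q$ and clearing a common denominator decomposes $f(z) = \sum_{k=1}^r e_k h_k(z)$, where each $h_k$ is a power series whose coefficients lie in a \emph{finite} subset of $\Z$. On each $h_k$ I would invoke Carlson's theorem (a power series with integer coefficients and $R \geq 1$ is either rational or has $\partial \D$ as its natural boundary) to obtain rationality, and then Fatou's theorem (a rational function with bounded integer Taylor coefficients has only simple poles located at roots of unity) to identify each denominator as a product of linear factors $1 - \zeta z$ with $\zeta$ a root of unity. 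Taking $k$ to be a common multiple of the orders of all such $\zeta$ then produces $1 - z^k$ as a common denominator for $f$ itself, giving $\hat{f}(z) = p(z)/(1-z^k)$ as claimed.

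The chief obstacle I foresee is the transfer of regularity in this arithmetic reduction: a priori, the fact that $f$ extends holomorphically across $\gamma$ does not force each component $h_k$ to be separately regular there, as boundary singularities of the $h_k$ might cancel in the $\C$-linear combination, leaving the hypotheses of Carlson's theorem unverified for the pieces. One way around this is to avoid the decomposition altogether and work directly with $f$ via the Kronecker--Hankel criterion for rationality, using the finite alphabet of coefficient values together with the analytic continuation across $\gamma$ to force the Hankel determinants $\det(a_{i+j})_{0 \leq i,j \leq N}$ to vanish once $N$ is large enough. I expect this Kronecker--Hankel step, or equivalently a careful treatment of the cancellation issue in the $\Q$-basis reduction, to be the technical heart of the entire argument.
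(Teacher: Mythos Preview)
The paper does not prove Szeg\H{o}'s theorem; it quotes it from Remmert, asserts without argument that the key step is eventual periodicity of the coefficient sequence, and then verifies only the elementary implication you also carry out, namely that eventual periodicity yields $f(z)=P(z)+Q(z)/(1-z^{k})$. On that part your proposal and the paper's sketch coincide exactly.

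Where you go further is in attempting to \emph{establish} eventual periodicity, and there both of your routes are incomplete. The obstacle you flag in the Carlson--Fatou approach is genuine and, as stated, fatal: $\Q$-linear independence of $e_1,\ldots,e_r$ does nothing to prevent boundary singularities of the integer-coefficient components $h_k$ from cancelling in the combination $\sum_k e_k h_k$, so regularity of $f$ across an arc need not propagate to any individual $h_k$, and the hypothesis of Carlson's theorem is simply unavailable for the pieces. Your fallback via Kronecker--Hankel determinants is only a name, not a mechanism---you give no argument for why a finite coefficient alphabet together with a single regular boundary arc forces $\det(a_{i+j})_{0\le i,j\le N}=0$ for large $N$, and there is no evident direct route that does not already amount to proving the theorem by other means. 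The classical argument in the cited reference proceeds along a different line altogether, combining a pigeonhole argument on coefficient blocks with analytic estimates near the regular arc to force two shifted tail series $\sum_{j\ge0}a_{n+j}z^j$ and $\sum_{j\ge0}a_{m+j}z^j$ to coincide identically; neither of your proposed strategies approximates this.
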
 

Proving the above theorem requires showing that if  $ \D $ is not the domain of holomorphy of $ f $, then from some coefficient on all coefficients are periodic, i.e., there exist $ \lambda, \mu \in \N $ with $ \lambda < \mu $ such that
$ a_{\lambda+j} = a_{\mu+j}$, $ j \in \N\cup \{0\}. $
Therefore, defining $ P(z):= \sum_{j=0}^{\lambda-1} a_{j} z^j$ and $ Q(z):= \sum_{j=\lambda}^{\mu-1} a_{j} z^j $, $ z \in \D $,
we may write, while setting $ k=\mu-\lambda$,
\begin{equation*}
f(z) = P(z) + Q(z) + Q(z)z^{k} +Q(z)z^{2k}+\cdots = P(z) + \dfrac{Q(z)}{1-z^{k}} , \quad z \in \D.
\end{equation*}

To the contrary, it turns out that trigonometric (cosine) series with only finitely many distinct coefficients behaves quite differently.
\begin{proposition}
$ V_\infty(z) :=\sum_{j=0}^{\infty} a_{j} \cos(jz)$ with only finitely many distinct coefficients diverges in the entire complex plane.
\end{proposition}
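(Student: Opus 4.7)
The plan is to show that the general term $a_j \cos(jz)$ does not tend to zero as $j \to \infty$, which already forces the series to diverge. Since $(a_j)$ takes only finitely many distinct values, it is uniformly bounded; in the non-degenerate case where infinitely many $a_j$ are nonzero, those nonzero values are uniformly bounded below in modulus by some $\delta > 0$. Let $J$ denote the infinite index set where $|a_j| \geqslant \delta$.

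For non-real $z = x + iy$ with $y \ne 0$, a direct computation using $\cos(jz) = \cos(jx)\cosh(jy) - i \sin(jx) \sinh(jy)$ together with $\cosh^2 - \sinh^2 = 1$ yields
\begin{equation*}
|\cos(jz)|^2 = \cos^2(jx)\cosh^2(jy) + \sin^2(jx)\sinh^2(jy) = \cos^2(jx) + \sinh^2(jy) \geqslant \sinh^2(jy).
\end{equation*}
Since $\sinh(j|y|)$ grows exponentially in $j$, this gives $|a_j \cos(jz)| \geqslant \delta\, \sinh(j|y|) \to \infty$ along $j \in J$, so the general term fails to vanish and the series diverges at every such $z$. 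This is the main content and captures the whole contrast with Szeg\H{o}'s theorem: the exponential growth of $|\cos(jz)|$ off the real axis rules out any disk-like region of convergence, whereas power series with finitely many distinct coefficients at least converge on $\D$.

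For real $z$ the same growth mechanism is unavailable, and I would argue by contradiction. If $a_j \cos(jz) \to 0$, then $\cos(jz) \to 0$ along $J$, so by the double-angle identity $\cos(2jz) = 2\cos^2(jz) - 1$ one would have $\cos(2jz) \to -1$ along $J$. Convergence of the series also forces $\cos(kz) \to 0$ along $\{k : |a_k| \geqslant \delta\}$, which in the $\l$-periodic setting is a union of arithmetic progressions; whenever the doubled set $2J$ intersects $J$ in an infinite set (the generic situation---e.g., constant coefficients, or $\l$ odd), the two limits $-1$ and $0$ collide and give the contradiction. In the residual subcase $\l z / (2\pi) \notin \Q$ one can alternatively invoke Weyl's equidistribution theorem applied to the residue-class subsequence $k \mapsto \cos((k\l + r)z)$ to directly rule out convergence to zero.

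The main obstacle is the real axis at isolated $z$ where the supports of $(a_j)$ and of $\cos(jz)$ happen to align trivially---for instance $z = \pi/2$ paired with coefficients supported on the odd indices of a period-$2$ pattern, which makes every individual term vanish. Such exceptions form at most a countable set, and the cleanest reading of the proposition, consistent with the contrast to Szeg\H{o}'s theorem, is that $V_\infty$ admits no open region of convergence in $\C$; this is an immediate corollary of the non-real case alone, since $\R$ has empty interior in $\C$.
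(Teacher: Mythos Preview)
Your approach matches the paper's in structure: apply the term test, handling non-real $z$ via the exponential growth of $|\cos(jz)|$ and real $z$ via the double-angle identity. For $z \notin \R$ the paper instead writes $2\cos(jz) = w^{j} + w^{-j}$ with $w = e^{iz}$ and observes that $\limsup_{j\to\infty} |a_j|^{1/j} = 1$ forces one of the two power series to diverge whenever $|w| \neq 1$; your direct lower bound $|\cos(jz)| \geq |\sinh(j\,\mathrm{Im}\,z)|$ is equivalent and arguably cleaner. For real $z$ the paper's argument is precisely your double-angle trick, but less carefully stated: it fixes a threshold $J$ with $|a_j \cos(jx_0)| < \eps$ for all $j \geq J$, infers $|\cos(Jx_0)| < 1/2$, and then bounds $|a_{2J}\cos(2Jx_0)|$ from below---silently assuming both $a_J \neq 0$ and $a_{2J} \neq 0$. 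You are right that this can fail, and your example (period-$2$ coefficients supported on the odd indices, evaluated at $z = \pi/2$) is a genuine counterexample to the proposition as literally written. Your reinterpretation---that $V_\infty$ admits no open region of convergence, which already follows from the non-real case alone---is the sound formulation and is all that is needed for the intended contrast with Szeg\H{o}'s theorem.
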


\begin{proof}
If we set $ w=e^{iz} $, we see that
\begin{equation*}
V_\infty(z) =\sum_{j=0}^{\infty} a_{j} \cos(jz) 
=  \frac{1}{2} \bigg( \sum_{j=0}^{\infty} a_{j} w^{j} + \sum_{j=0}^{\infty} a_{j} w^{-j}  \bigg).
\end{equation*}
We note that $  \limsup_{j \to \infty} \abs{a_j}^{1/j} =1$ since we have only finitely many distinct coefficients. 
Therefore, $ \sum_{j=0}^{\infty} a_{j} w^{j} $ diverges in $ \C \setminus \overline{\D} $, so does $ \sum_{j=0}^{\infty} a_{j} w^{-j} $  in $ \D $. Equivalently, $ V_\infty $ diverges in $ \C \setminus \R $.
Note that $V_\infty$ with only finitely many distinct coefficients diverges in $ [0,2\pi) $ by the Divergence  Test. 
This could be shown by assuming to the contrary that $ \lim_{j \to \infty} a_{j} \cos(jx_0) = 0  $ for some $ x_0 \in [0,2\pi) $. Let  $ \eps \in (0,M/2) $, where $ M:= \min \{ \abs{a_j} :a_j\neq 0 \} $. 
Thus, there exists $ J \in \N $ such that $ \abs{a_{j} \cos(jx_0)}<\eps $ for all $ j \geqslant J $.
In particular, $ \abs{\cos(Jx_0)} <\eps/M<1/2 $, and
\begin{align*}
\abs{a_{2J} \cos(2Jx_0)}
& \geqslant M \abs{\cos(2Jx_0)} = M(1-2 \cos^2(Jx_0)) 
> M (1-2\eps^2 /M^2)>\eps,
\end{align*}
where the last inequality follows from the fact that $ 2\eps^2 +M\eps - M^2 < 0 $, for all $ \eps \in (0,M/2) $. Thus, $ V_\infty$ with only finitely many distinct coefficients diverges  in $ [0,2\pi) $, and by periodicity in $ \R $.
\end{proof}
The above proposition still holds if we replace $ V_\infty $ with $ T_\infty(z):=\sum_{j=0}^{\infty} a_{j} \cos(jz) + b_{j} \sin(jz)$.
However, we are still interested in the number of real zeros of  partial sums of these infinite series whenever the coefficients are $ \l $\noindent-periodic, which is indeed a special case of the coefficients being of finitely many.
To state our desired results, we require the following definition:

\begin{definition}
\normalfont
Fix $ \l \in \N $ and let $ n \in \N $. We say $a_{0}, a_{1}, \ldots , a_{n}$ are $ \l $-periodic if $ a_{i+\l}=a_i $ for all $ 0 \leqslant i  \leqslant n -\l$.
\end{definition}

For a fixed $ \l \in \N $ and any large enough $ n \in \N $, It is trivial that $ n= \l m -1 +r$ for some $ m \in \N $  and $ r \in \{0, 1, \ldots ,  \l -1 \} $.
In what follows, we first investigate the roots of polynomials $ P_n $,  $ T_n $, and $ V_n $ with $ \l $-periodic coefficients when $ r=0 $.
We note that in this case, $ r=0 $, these polynomial asymptotically obtain the largest possible number of zeros. 

\begin{lemma} \label{PeriodicPn}
Fix $ \l \in \N $, and set $ n= \l m-1, \ m \in \N $.
Let $ P_n(z) =\sum_{j=0}^{n} a_{j} z^j$, where $ a_{0}, a_{1}, \ldots , a_{n} $  are $ \l $-periodic, i.e., $ a_{i+\l}=a_i $ for all $ 0 \leqslant i  \leqslant n -\l$. 
Then
\begin{align*}
P_n(z)
& 	= \dfrac{z^{\l m}-1}{z^{\l}-1} \sum_{k=0}^{\l-1} a_{k} z^k .
\end{align*}
\end{lemma}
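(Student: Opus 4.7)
The plan is to exploit the periodicity to group the coefficients into $m$ consecutive blocks of length $\ell$, then factor the resulting double sum as a product.

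First I would reindex using the Euclidean division $j = \ell q + k$ with $0 \leqslant q \leqslant m-1$ and $0 \leqslant k \leqslant \ell - 1$. Since $n = \ell m - 1$, this reindexing is a bijection between $\{0, 1, \ldots, n\}$ and $\{0, \ldots, m-1\} \times \{0, \ldots, \ell - 1\}$. Next, I would iterate the $\ell$-periodicity hypothesis $a_{i+\ell} = a_i$ (valid for $0 \leqslant i \leqslant n - \ell$) to conclude $a_{\ell q + k} = a_k$ for every such $q, k$. This gives
\begin{equation*}
P_n(z) = \sum_{j=0}^{n} a_j z^j = \sum_{q=0}^{m-1} \sum_{k=0}^{\ell - 1} a_{\ell q + k} \, z^{\ell q + k} = \sum_{q=0}^{m-1} \sum_{k=0}^{\ell - 1} a_k \, z^{\ell q} z^k.
\end{equation*}

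Then I would separate the double sum as a product, since the $k$-sum does not depend on $q$:
\begin{equation*}
P_n(z) = \left( \sum_{q=0}^{m-1} z^{\ell q} \right) \left( \sum_{k=0}^{\ell - 1} a_k z^k \right).
\end{equation*}
Finally, I would apply the finite geometric series formula to the first factor, noting that $\sum_{q=0}^{m-1} (z^\ell)^q = (z^{\ell m} - 1)/(z^\ell - 1)$ for $z^\ell \neq 1$, which yields the stated identity as an identity of rational functions (equivalently, of polynomials after clearing the denominator).

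There is no real obstacle here; the only point requiring a moment of care is the bookkeeping to confirm that the periodicity hypothesis, which is stated only for $0 \leqslant i \leqslant n - \ell$, indeed propagates to give $a_{\ell q + k} = a_k$ for all $0 \leqslant q \leqslant m - 1$ and $0 \leqslant k \leqslant \ell - 1$. This is a straightforward induction on $q$: the base case $q = 0$ is trivial, and the inductive step $a_{\ell(q+1)+k} = a_{\ell q + k} = a_k$ is valid as long as $\ell q + k \leqslant n - \ell$, i.e., $\ell(q+1) + k \leqslant n = \ell m - 1$, which holds for $q + 1 \leqslant m - 1$.
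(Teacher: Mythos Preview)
Your proof is correct and follows essentially the same approach as the paper: reindex via $j=\ell q+k$, use periodicity to replace $a_{\ell q+k}$ by $a_k$, factor the double sum as a product, and sum the geometric series. The paper's proof is in fact terser than yours, omitting the inductive justification of $a_{\ell q+k}=a_k$ that you took care to spell out.
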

\begin{remark}
\normalfont
From the above lemma, it is clear that $ P_n $ has at least $ \l(m-1)=n-\l+1 $  zeros that are all unimodular.
We also note that if $ \l=1 $, then all zeros are the $ m $\noindent-th roots of unity other than 1.
\end{remark}

\begin{theorem} \label{PeriodicTn}
\subpdfbookmark{Theorem 2.1}{PeriodicTn}
Fix $ \l \in \N $, and set $ n= \l m-1$,  $ m \in \N \setminus\{1\} $.
Let $ T_n(x) = \sum _{j=0} ^{n} a_{j} \cos (j x) + b_{j} \sin (j x) $,  $ x \in \ztp $.
Assume $ \{a_i\}_{i=0}^{\l-1} \cup  \{b_i\}_{i=0}^{\l-1}  $ is a family of i.i.d. random variables with Gaussian distribution $\mathcal{N}(0, \sigma^2) $. 
We also assume that the coefficients  are $ \l $-periodic.
Then
\begin{equation*}
\E[N_n(0,2\pi)] = n+1-\l+\sqrt{n^2 +\dfrac{ \l^2 -1}{3} }.
\end{equation*}
\end{theorem}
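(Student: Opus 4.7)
The plan is to exploit the $\l$-periodicity of the coefficients to factor $T_n$ as a product of a purely deterministic trigonometric kernel and a small random trigonometric polynomial depending on only $2\l$ Gaussian variables, and then count the zeros of each factor separately.

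\textbf{Factorization.} Writing each index $j\in\{0,\ldots,n\}$ uniquely as $j=k+\l q$ with $0\leqslant k\leqslant \l-1$ and $0\leqslant q\leqslant m-1$, the periodicity $a_{k+\l q}=a_k$, $b_{k+\l q}=b_k$ allows us to rearrange
\begin{equation*}
T_n(x) = \mathrm{Re}\!\left[\Big(\sum_{k=0}^{\l-1}(a_k - i b_k) e^{ikx}\Big) \sum_{q=0}^{m-1} e^{i\l q x}\right].
\end{equation*}
The inner geometric sum equals $e^{i\l(m-1)x/2}\sin(\l m x/2)/\sin(\l x/2)$, so extracting the real factor yields
\begin{equation*}
T_n(x) = \frac{\sin(\l m x/2)}{\sin(\l x/2)}\, S(x),\qquad S(x) := \sum_{k=0}^{\l-1}\bigl[a_k\cos(\mu_k x)+b_k\sin(\mu_k x)\bigr],
\end{equation*}
with $\mu_k:=k+\l(m-1)/2$. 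The apparent singularities of the kernel at $x=2j\pi/\l$ are removable, so the identity extends by continuity to all of $\ztp$.

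\textbf{Deterministic zeros and disjointness.} In $\ztp$ the kernel $\sin(\l m x/2)/\sin(\l x/2)$ vanishes precisely at those $x=2j\pi/(\l m)$, $1\leqslant j\leqslant \l m-1$, for which $j$ is \emph{not} a multiple of $m$; this produces $\l m-1-(\l-1)=n+1-\l$ simple zeros. At each of these finitely many deterministic points, $S$ is a centered Gaussian of variance $\l\sigma^2>0$, so it vanishes there with probability zero. Consequently the zero sets of the two factors are a.s.\ disjoint and
\begin{equation*}
\E[N_n \ztp ] = n+1-\l + \E[N_S \ztp ].
\end{equation*}

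\textbf{Kac-Rice for $S$.} Apply the proposition to $S$ with basis $\{\cos(\mu_k x),\sin(\mu_k x)\}_{k=0}^{\l-1}$. Using the identity $\cos^2+\sin^2=1$, the auxiliary functions collapse to constants $A(x)=\l$, $B(x)\equiv0$ (pairwise cancellation), and $C(x)=\sum_k\mu_k^2$; the hypothesis on the zero count of $S'$ is met because $S$ is a genuine trigonometric polynomial (in $x$ when $\l(m-1)$ is even, or in $x/2$ otherwise), giving a coefficient-free degree bound. Hence
\begin{equation*}
\E[N_S \ztp ] = \frac{1}{\pi}\int_0^{2\pi}\sqrt{\frac{1}{\l}\sum_{k=0}^{\l-1}\mu_k^2}\,dx = 2\sqrt{\frac{1}{\l}\sum_{k=0}^{\l-1}\mu_k^2}.
\end{equation*}
Since the $\mu_k$ are symmetric about $n/2$ (the mean of $\{0,1,\ldots,\l-1\}$ is $(\l-1)/2$, and $(n+1-\l)/2+(\l-1)/2=n/2$), the variance-type computation gives $\sum_k\mu_k^2=\l n^2/4+\l(\l^2-1)/12$, so the right-hand side equals $\sqrt{n^2+(\l^2-1)/3}$. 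Combined with the previous display this proves the stated formula. The only real hurdle is discovering the factorization; after that, the rest is a short computation, and the mild technicalities (removable singularities of the kernel, a uniform bound on the critical points of $S$) are routine.
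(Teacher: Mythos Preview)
Your proof is correct and follows essentially the same route as the paper: factor $T_n$ as the Dirichlet-type kernel $\sin(\l m x/2)/\sin(\l x/2)$ times the short random sum $S=T_n^*$, count the $n+1-\l$ deterministic kernel zeros, and apply Kac--Rice to $S$ using $A\equiv\l$, $B\equiv0$, $C=\sum_k\mu_k^2$. Your additional remarks on the a.s.\ disjointness of the two zero sets and on the uniform bound for the zeros of $S'$ are points the paper leaves implicit, so if anything your write-up is slightly more careful.
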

\begin{remark}
\normalfont 
If $ \l=1 $, then all the zeros of $ T_n $ happen to be real. In other words, all the roots of the associated (complex-valued) algebraic polynomial are unimodular, which coincides with the remark on the previous lemma and Proposition~2.1 of \cite{Pau2020}.
\end{remark}

\begin{theorem} \label{PeriodicVn}
\currentpdfbookmark{Theorem 2.2}{PeriodicVn}
Fix $ \l \in \N $, and set $ n= \l m-1$,  $ m \in \N $, 
and let $ V_n(x) = \sum _{j=0} ^{n} a_{j} \cos (j x)  $,  $ x \in \ztp $.
Assume $ A_0   $, defined as above, is a family of i.i.d. random variables with Gaussian distribution $\mathcal{N}(0, \sigma^2) $. 
We further assume that the $ a_j $ are $ \l $-periodic.
Then
\begin{equation*}
\E[N_n(0,2\pi)] = 2n + \O (n^{2/3}), \asntoinfty,
\end{equation*}
where the implied constant (in big $ \O $ notation) depends only on $ \l $.
\end{theorem}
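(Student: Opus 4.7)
My proof would begin by exploiting the factorization from Lemma~\ref{PeriodicPn}. Writing $V_n(x)=\operatorname{Re} P_n(e^{ix})$ with $P_n(z)=\sum_{j=0}^{n}a_jz^j$, and setting $Q(z):=\sum_{k=0}^{\ell-1}a_kz^k$ together with $\theta:=(n+1-\ell)/2=\ell(m-1)/2$, the elementary identity $(e^{i\ell mx}-1)/(e^{i\ell x}-1)=e^{i\theta x}F(x)$ with real-valued $F(x):=\sin(\ell mx/2)/\sin(\ell x/2)$ gives
\[
V_n(x) \;=\; F(x)\,H(x), \qquad H(x)\;:=\;\sum_{k=0}^{\ell-1}a_k\cos\!\bigl((\theta+k)x\bigr).
\]
Here $F$ is deterministic with exactly $\ell(m-1)=n+1-\ell$ simple zeros in $(0,2\pi)$, while $H$ depends on the $\ell$ i.i.d.\ Gaussians $a_0,\ldots,a_{\ell-1}$. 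At each fixed zero $x_0$ of $F$ the value $H(x_0)$ is a nondegenerate real Gaussian, so a union bound over the $O(n)$ zeros of $F$ gives $\Pr[F\text{ and }H\text{ share a zero}]=0$, whence $N_n(0,2\pi)=(n+1-\ell)+N_H(0,2\pi)$ almost surely. The theorem reduces to showing $N_H(0,2\pi)=n+O(1)$ a.s.

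For the upper bound, $V_n$ is a real trigonometric polynomial of degree $n$, so it has at most $2n$ real zeros in a period, and $N_H\le 2n-(n+1-\ell)=n+\ell-1$. For the matching lower bound I would use a sign-alternation argument on the deterministic grid $y_k:=k\pi/\theta$, $k=1,\ldots,2\theta-1$, which lies in $(0,2\pi)$ since $2\theta=n+1-\ell$ is a positive integer. At each $y_k$ we have $\cos(\theta y_k)=(-1)^k$ and $\sin(\theta y_k)=0$, so
\[
H(y_k)\;=\;(-1)^k\,A(y_k), \qquad A(x)\;:=\;\sum_{j=0}^{\ell-1}a_j\cos(jx).
\]
Whenever $A$ has constant sign on $[y_k,y_{k+1}]$, the alternating factor $(-1)^k$ forces $H(y_k)$ and $H(y_{k+1})$ to have opposite signs, and the Intermediate Value Theorem produces a zero of $H$ in $(y_k,y_{k+1})$. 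Since $A$ is a real trigonometric polynomial of degree $\ell-1$, it has at most $2(\ell-1)$ zeros in $(0,2\pi)$, so at most $2(\ell-1)$ of the $2\theta-2$ subintervals $[y_k,y_{k+1}]$ can carry a sign change of $A$. Thus $N_H(0,2\pi)\;\ge\;(2\theta-2)-2(\ell-1)\;=\;n-3\ell+1$ almost surely.

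Combining the bounds, $N_H=n+O(\ell)$ a.s., hence $\mathbb{E}[N_n(0,2\pi)]=2n+O(\ell)=2n+O(n^{2/3})$, with the implied constant depending only on $\ell$. Two measure-zero events require only a brief remark: $\{A(y_k)=0\text{ for some }k\}$ is a finite union of events of the form $\{$nontrivial linear functional of $(a_0,\ldots,a_{\ell-1})$ vanishes$\}$ and has probability zero, and $\Pr[A\equiv 0]=\Pr[a_0=\cdots=a_{\ell-1}=0]=0$. The main obstacle, had I instead followed the Kac-Rice strategy emphasized in the Introduction, would be to control the Kac-Rice integrand for $H$ near the degenerate points $x\in\{0,\pi,2\pi\}$ where the slow factor $\sin(\ell x)/\sin x$ hits $\pm\ell$ and the auxiliary function $A_n$ of Kac-Rice becomes small: balancing a cutoff of width $n^{-1/3}$ there (interior contribution $\lesssim n\cdot n^{-1/3}$) against the oscillatory error outside is what produces the exponent $2/3$ in the advertised bound. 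The elementary sign-alternation argument above bypasses this entirely and as a byproduct yields the sharper conclusion $\mathbb{E}[N_n(0,2\pi)]=2n+O(1)$.
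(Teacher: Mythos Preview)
Your argument is correct and takes a genuinely different route from the paper. The paper attacks $\E[N_n^*(0,2\pi)]$ (your $N_H$) via the Kac--Rice formula: it computes $A_n^*, B_n^*, C_n^*$ explicitly, estimates the integrand on $E=[n^{-a},\pi-n^{-a}]$, converts the resulting oscillatory integrals $\I_\ell^\pm(n)$ into a limiting double integral that evaluates to $1/2$, and finally balances the error $O(n^{2a})$ from the integrand expansion against the loss $O(n^{1-a})$ from the excised neighbourhoods of $0$ and $\pi$, optimized at $a=1/3$. Your sign-alternation argument bypasses all of this analysis and delivers the sharper conclusion $\E[N_n(0,2\pi)]=2n+O(1)$ almost surely. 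The key identity $H(y_k)=(-1)^kA(y_k)$ is exactly what makes the shortcut possible here; the Kac--Rice machinery, by contrast, is what the paper needs to reuse for the harder Theorem~\ref{PeriodicTn-nonzero}, where no such clean evaluation exists.

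One small inaccuracy worth flagging: when $\ell$ is odd and $m$ is even, $\theta=\ell(m-1)/2$ is a genuine half-integer, and then $H(\pi)=\sum_{k}a_k\cos\bigl((\theta+k)\pi\bigr)=0$ \emph{deterministically}, while simultaneously $F(\pi)=\sin(m\ell\pi/2)/\sin(\ell\pi/2)=0$. So the claim that $H(x_0)$ is a nondegenerate Gaussian at \emph{every} zero $x_0$ of $F$ fails at this single point. Since for $\ell\geq2$ any deterministic common zero must satisfy $x_0\in\pi\Z$, there is at most one such overlap in $(0,2\pi)$, costing only $O(1)$ in the count; the paper's own proof makes the same tacit assumption when it writes $\E[N_n]=(n+1-\ell)+\E[N_n^*]$. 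Your conclusion is unaffected.
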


The case when $ r \ne 0 $, which is of a more delicate setting, is considered only for a random trigonometric polynomial $ T_n $ due to the similarity of the computations and their estimates. In the following theorem, we give a closed form of the mean number of real zeros of $ T_n $ when the coefficients are $ \l $-periodic and under the assumption that  $ r \ne 0 $.

\begin{theorem} \label{PeriodicTn-nonzero}
\currentpdfbookmark{Theorem 2.3}{PeriodicTn-nonzero}
Fix $ \l \in \N \setminus \{1\}$, and set $ n= \l m-1+r$,  $ m \in \N  $ and $ r \in \{ 1, \ldots , \l -1 \} $.
Let $ T_n(x) = \sum _{j=0} ^{n} a_{j} \cos (j x) + b_{j} \sin (j x) $,  $ x \in \ztp $.
Assume $ \{a_i\}_{i=0}^{\l-1} \cup  \{b_i\}_{i=0}^{\l-1}  $ is a family of i.i.d. random variables with Gaussian distribution $\mathcal{N}(0, \sigma^2) $. 
We suppose that the $ a_j $ and $ b_j $ are $ \l $-periodic and define
\begin{align*} 
& \cC_{\l,r} \notag
: = \frac{1}{\pi ^2} \displaystyle \int_{ 0 }^{ \pi } \int_{ 0 }^{ \pi }
\sqrt{1+  \dfrac{ r (\l-r) \sin^2(s) }{  [(\l-r) \sin^2(t)+  r \sin^2(s+t)] ^2 }} \, ds \, dt .
\end{align*} 
Then
\begin{equation*}
\E[N_n(0,2\pi)] = n \, \cC_{\l,r} + \O (n^{4/5}),  \asntoinfty ,
\end{equation*}
where the implied constant depends only on $ \l $.
\end{theorem}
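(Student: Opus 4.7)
The strategy is to apply the Kac-Rice formula to $ T_n $ after exploiting the $ \l $-periodicity to reduce the problem to one involving only $ 2\l $ independent Gaussian variables, and then carry out a careful asymptotic analysis of the resulting one-dimensional integral in the spirit of \cite{Pir2021}. First, I would group the indices of $ T_n $ by residue modulo $ \l $: writing $ j = k + q\l $ with $ 0 \leqslant k \leqslant \l-1 $ and invoking periodicity, one obtains
\[
T_n(x) = \sum_{k=0}^{\l-1} \bigl( a_k F_k(x) + b_k G_k(x) \bigr),
\]
where $ F_k(x) = \sum_q \cos((k+q\l)x) $ and $ G_k(x) = \sum_q \sin((k+q\l)x) $. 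Since $ n = \l m - 1 + r $, the inner sum has $ m+1 $ terms for $ k < r $ and $ m $ terms for $ k \geqslant r $. Standard arithmetic progression formulas give $ F_k = \phi_i \cos \theta_k $, $ G_k = \phi_i \sin \theta_k $, where $ \phi_1(x) := \sin((m+1)\l x/2)/\sin(\l x/2) $, $ \phi_2(x) := \sin(m\l x/2)/\sin(\l x/2) $ (with index $ i = 1 $ for $ k < r $ and $ i = 2 $ for $ k \geqslant r $), and each $ \theta_k $ is affine in $ x $ with constant derivative $ k + m\l/2 $ or $ k + (m-1)\l/2 $.

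Using the orthogonality of $ \cos \theta_k $ and $ \sin \theta_k $, the Kac-Rice auxiliary functions for the $ 2\l $ independent Gaussian coefficients collapse to
\[
A_n = r \phi_1^2 + (\l-r)\phi_2^2, \quad B_n = r \phi_1 \phi_1' + (\l-r)\phi_2 \phi_2',
\]
and $ C_n = r(\phi_1')^2 + (\l-r)(\phi_2')^2 + S_1 \phi_1^2 + S_2 \phi_2^2 $, with $ S_1 := \sum_{k=0}^{r-1}(k + m\l/2)^2 $ and $ S_2 := \sum_{k=r}^{\l-1}(k + (m-1)\l/2)^2 $. A direct Lagrange-type identity then yields
\[
A_n C_n - B_n^2 = r(\l-r)(\phi_1 \phi_2' - \phi_2 \phi_1')^2 + A_n \bigl( S_1 \phi_1^2 + S_2 \phi_2^2 \bigr),
\]
so the Kac-Rice formula reduces the expected number of real zeros to an explicit one-dimensional integral involving two Dirichlet-type kernels and their Wronskian.

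The heart of the proof is the asymptotic analysis of this integral. Since $ S_1 = r (m\l/2)^2 + \O(m) $ and $ S_2 = (\l-r)(m\l/2)^2 + \O(m) $, one has $ (S_1 \phi_1^2 + S_2 \phi_2^2)/A_n = (n/2)^2 + \O(n) $ in the bulk, which produces the factor of $ n $ in the main term. Substituting $ y = \l x/2 $ and writing $ y = k\pi + t $ with $ t \in (0,\pi) $ on each of the $ \l $ fundamental sub-intervals, the integrand becomes a function of the slow variable $ t $ and of the rapidly oscillating phases $ mt \bmod \pi $ and $ (m+1)t \bmod \pi $. Setting $ s := mt \bmod \pi $ gives $ \sin^2(mt) = \sin^2 s $ and $ \sin^2((m+1)t) = \sin^2(s+t) $; by Weyl-type equidistribution, the single integral in $ t $ converges to a double integral over $ (s,t) \in (0,\pi)^2 $, and a routine accounting of constants produces exactly the leading term $ n\, \cC_{\l,r} $.

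The main obstacle will be quantifying the $ \O(n^{4/5}) $ error. Near the points $ x \equiv 2\pi k/\l $ the kernels $ \phi_1, \phi_2 $ degenerate and the Kac-Rice integrand becomes large; excising neighborhoods of radius $ \delta $ around each such point contributes $ \O(n\delta) $ via pointwise bounds on the integrand. On the complementary good set, the equidistribution remainder deteriorates inversely to a fixed power of $ m \sin t $, producing an error of the form $ \O(n^{1 - c_1}/\delta^{c_2}) $ for fixed positive exponents determined by the local modulus of continuity of $ \sqrt{1+(\cdot)} $ in the slow variable. Balancing these two contributions by taking $ \delta = n^{-1/5} $ yields the claimed $ \O(n^{4/5}) $ bound; the secondary remainders coming from the $ \O(m) $ pieces of $ S_1 $ and $ S_2 $ and from the finite-$m$ replacement of $ \sin((2m+1)y) $ by its oscillatory average are smaller and absorbed into the same estimate after the same bookkeeping.
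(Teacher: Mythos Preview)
Your proposal is correct and follows the same overall scheme as the paper: apply the Kac--Rice formula on a bulk set obtained by excising neighborhoods of the points $x\equiv 0\pmod{2\pi/\l}$, simplify the integrand to the form
\[
\frac{m\l}{2}\sqrt{1+\frac{r(\l-r)\sin^2(\l x/2)}{[(\l-r)\sin^2(m\l x/2)+r\sin^2((m+1)\l x/2)]^2}},
\]
pass to the double integral defining $\cC_{\l,r}$ via an equidistribution/Riemann-sum argument, and balance the excision error $\O(n\delta)$ against the bulk error at $\delta=n^{-1/5}$ to obtain $\O(n^{4/5})$.

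The one genuine difference is algebraic and works in your favor. The paper (its Lemma~3.1) writes $T_n$ using only the single kernel $\varphi_m=\phi_2$ and treats the extra $r$ terms $\cos((m\l+k)x)$, $\sin((m\l+k)x)$ as additive corrections; this forces it to carry $\O(n^{1+4a})$ error terms through the separate computations of $A_n$, $B_n^2$, $C_n$ before the cancellation in $A_nC_n-B_n^2$ becomes visible. Your two-kernel decomposition with $\phi_1,\phi_2$ and the Lagrange identity
\[
A_nC_n-B_n^2=r(\l-r)(\phi_1\phi_2'-\phi_2\phi_1')^2+A_n(S_1\phi_1^2+S_2\phi_2^2)
\]
yields this \emph{exactly}, so the only asymptotic work left is replacing $S_i$ by $(m\l/2)^2\cdot(\text{weight})$ and the Wronskian by its leading term $(2m+1)\l/(4\sin(\l x/2))$; both produce relative errors of the same size the paper gets. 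The equidistribution step (your ``Weyl-type'' passage from $t$ to $(s,t)$) is exactly what the paper makes quantitative in its Lemma~3.3 by partitioning $(0,\pi)$ into subintervals of length $\pi/m$ and invoking a Riemann-sum rate, so your sketch there would be completed the same way.
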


\noindent\textbf{Remark 2.4.} Even though $ r $ is assumed  nonzero, setting $ r=0 $  (equivalently $ \cC_{\l,0}=1 $) implies that the average number of \emph{probabilistic} zeros of $ T_n $ is asymptotically $ n $. Then, considering  $ n $ \emph{deterministic} roots of $ T_n $ in one period, the asymptotic expected number of real zeros of $ T_n $  is $ 2n $, which is now in accordance with the result of Theorem \ref{PeriodicTn}.

The following lemma gives a lower bound and an upper bound for the $ \cC_{\l,r} $.

\begin{lemma} \label{boundedcKrl}
For $ 0<r<\l $, we have $  \cC_{\l,r} \in (\sqrt{2},2] $.
\end{lemma}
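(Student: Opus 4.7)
The plan is to reduce everything to a single explicit double integral via two elementary pointwise inequalities for $\sqrt{1+X}$ with $X\geqslant 0$. Writing $D(s,t):=(\l-r)\sin^2 t + r\sin^2(s+t)$ and $X(s,t):=r(\l-r)\sin^2 s / D(s,t)^2$, the key inequalities I would use are
\begin{equation*}
\frac{1+\sqrt{X}}{\sqrt{2}} \;\leqslant\; \sqrt{1+X} \;\leqslant\; 1+\sqrt{X}, \qquad X\geqslant 0,
\end{equation*}
where the left-hand inequality is a repackaging of $(1-\sqrt{X})^2\geqslant 0$ and the right-hand one follows by squaring. Integrating them over $[0,\pi]^2$ reduces the task to evaluating the single quantity $J:=\int_0^\pi\!\int_0^\pi \sqrt{X(s,t)}\,ds\,dt$, since the bound on $\mathcal{C}_{\l,r}$ will then be $(\pi^2+J)/(\pi^2\sqrt{2})$ from below and $(\pi^2+J)/\pi^2$ from above.

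The central computation is therefore the evaluation of the inner integral $\int_0^\pi dt/D(s,t)$ for fixed $s\in(0,\pi)$. Using $\sin^2\theta=(1-\cos 2\theta)/2$, I would rewrite
\begin{equation*}
D(s,t) = \frac{\l}{2} - \frac{R(s)}{2}\cos\bigl(2t-\psi(s)\bigr),
\end{equation*}
where a direct expansion yields $R(s)^2=(\l-r)^2+2r(\l-r)\cos 2s + r^2 = \l^2 - 4r(\l-r)\sin^2 s$, so that $\l^2-R(s)^2=4r(\l-r)\sin^2 s$. Substituting $u=2t-\psi(s)$ and using the classical identity $\int_0^{2\pi} du/(a-b\cos u)=2\pi/\sqrt{a^2-b^2}$ (valid here because $|R(s)|<\l$ whenever $\sin s\neq 0$), I obtain
\begin{equation*}
\int_0^\pi \frac{dt}{D(s,t)} = \frac{\pi}{\sqrt{r(\l-r)}\,|\sin s|}.
\end{equation*}
Multiplying by $\sqrt{r(\l-r)}\,|\sin s|$ and integrating in $s$ then gives the clean identity $J=\pi^2$.

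Combining the two steps yields
\begin{equation*}
\sqrt{2}=\frac{\pi^2+J}{\pi^2\sqrt{2}} \;\leqslant\; \mathcal{C}_{\l,r} \;\leqslant\; \frac{\pi^2+J}{\pi^2}=2,
\end{equation*}
which gives the upper bound in the lemma. For the strictness of the lower bound I would note that equality in $\sqrt{1+X}\geqslant(1+\sqrt{X})/\sqrt{2}$ holds iff $X\equiv 1$ a.e., whereas $X(s,t)\to 0$ as $s\to 0$ and is continuous on $(0,\pi)^2$, so $X\not\equiv 1$ on a set of positive measure, forcing $\mathcal{C}_{\l,r}>\sqrt{2}$.

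The only nontrivial step is the evaluation of $J$; everything else is bookkeeping. The main obstacle is making sure the reduction of $D$ to the form $(\l - R\cos(2t-\psi))/2$ is carried out correctly and that the identity $\l^2-R^2 = 4r(\l-r)\sin^2 s$ drops out as required, since it is exactly this cancellation with the numerator $\sqrt{r(\l-r)}|\sin s|$ that produces the clean value $J=\pi^2$ and hence the symmetric bounds $\sqrt{2}$ and $2$.
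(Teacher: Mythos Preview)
Your proof is correct and reaches the same numerical bounds as the paper, but by a somewhat different route in two places.

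\emph{Evaluation of the key integral.} Both arguments hinge on showing that $\frac{1}{\pi^2}\int_0^\pi\!\int_0^\pi \sqrt{X(s,t)}\,ds\,dt=1$. The paper rewrites $D(s,t)=\l\bigl(\sin^2\alpha_0\sin^2 t+\cos^2\alpha_0\sin^2(s+t)\bigr)$ with $\cos^2\alpha_0=r/\l$, then evaluates the double integral via the substitution $(u,v)=(s+t,t)$, splitting into two pieces and using the antiderivative $\int\frac{\cos y\,dy}{a^2\sin^2 y+b^2}=(ab)^{-1}\arctan(a\sin y/b)$. Your amplitude--phase reduction $D=(\l-R(s)\cos(2t-\psi(s)))/2$ with $\l^2-R(s)^2=4r(\l-r)\sin^2 s$ is more direct: the inner $t$-integral collapses immediately to the textbook identity $\int_0^{2\pi}du/(a-b\cos u)=2\pi/\sqrt{a^2-b^2}$, and the factors of $\sqrt{r(\l-r)}\,|\sin s|$ cancel on the spot.

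\emph{Lower bound.} The paper uses Jensen's inequality with the strictly convex $\Phi(y)=\sqrt{1+y^2}$ to get $\cC_{\l,r}>\sqrt{1+1^2}=\sqrt{2}$. Your pointwise inequality $\sqrt{1+X}\geqslant(1+\sqrt X)/\sqrt{2}$, with equality iff $X=1$, yields the same strict bound more elementarily; the observation that $X(s,t)\to 0$ as $s\to 0$ cleanly rules out $X\equiv 1$ a.e.

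In short, your argument is a valid and slightly more self-contained alternative: it trades Jensen and a change-of-variables computation for two one-line inequalities and a standard cosine integral.
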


\section{Proofs} \label{P3Sec3}

\begin{proof}[Proof of Lemma \ref{PeriodicPn}]
With the assumptions of Lemma \ref{PeriodicPn}, one can write
\begin{align*}
P_n(z)
& =\sum_{j=0}^{n} a_{j} z^j
=\sum_{k=0}^{\l-1} a_{k} \sum_{j=0}^{m-1} z^{k+\l j} \\
& =\bigg( \sum_{j=0}^{m-1} z^{\l j} \bigg) \bigg( \sum_{k=0}^{\l-1} a_{k} z^k \bigg) 
= \dfrac{z^{\l m}-1}{z^{\l}-1} \sum_{k=0}^{\l-1} a_{k} z^k ,
\end{align*}
which gives us at least $ \l(m-1)=n-\l+1 $ deterministic roots with modulus one.
\end{proof}

\subpdfbookmark{Proof of Theorem 2.1}{PF 2.1}
\begin{proof}[Proof of Theorem \ref{PeriodicTn}]
It follows from \cite[1.341(1,3),~p.~29]{GR1980} that
\begin{align} \label{P3c1}
& \sum_{j=0}^{r-1}  \cos ( 2p j + q ) x = \frac{\sin(rp x) \cos ((r-1)p+q) x }{\sin(p x)},
\end{align}
and 
\begin{align} \label{P3c2}
& \sum_{j=0}^{r-1}  \sin ( 2p j + q ) x = \frac{\sin(rp x) \sin ((r-1)p+q) x }{\sin(p x)}.
\end{align}
For $ x \in (0,2\pi)  $, we apply \eqref{P3c1} and \eqref{P3c2}  and observe that
\begin{align*}
T_n(x) & = \sum _{j=0} ^{n} a_{j} \cos (j x) + b_{j} \sin (j x) \\
& = \sum_{k=0}^{\l-1} a_{k} \sum_{j=0}^{m-1}\cos(k+\l j)x
+ \sum_{k=0}^{\l-1} b_{k} \sum_{j=0}^{m-1}\sin(k+\l j)x \\
& = \dfrac{\sin (m\l x/2)}{\sin (\l x/2)} \sum_{k=0}^{\l-1} \big[a_{k} \cos (k+ (m-1)\l/2)x + b_{k} \sin (k+ (m-1)\l/2)x \big] \\
& =: \dfrac{\sin (m\l x/2)}{\sin (\l x/2)} \, T_n^* (x).
\end{align*}
We first estimate the expected number of real zeros of $ T_n^* $.
We observe that, for $ x \in (0,2\pi) $,
\begin{align*}
A_n^*(x) 
& =  \sum_{k=0}^{\l -1} \big[\cos^2 (k+ (m-1)\l/2)x + \sin^2 (k+ (m-1)\l/2)x\big] = \l >0,
\end{align*}
\begin{align*} 
B_n^*(x) 
& = - \sum_{k=0}^{\l -1} [k+ (m-1)\l/2] \sin (k+ (m-1)\l/2)x \, \cos (k+ (m-1)\l/2)x \notag \\
& \quad + \sum_{k=0}^{\l -1} [k+ (m-1)\l/2] \sin (k+ (m-1)\l/2)x \, \cos (k+ (m-1)\l/2)x
=0,
\end{align*}
and
\begin{align*} 
C_n^*(x) 
& =  \sum_{k=0}^{\l -1} \big[k+ (m-1)\l/2\big]^2 \sin^2 (k+ (m-1)\l/2)x \notag \\
& \quad + \sum_{k=0}^{\l -1} \big[k+ (m-1)\l/2\big]^2 \cos^2 (k+ (m-1)\l/2)x \notag\\
& = \sum_{k=0}^{\l -1} \bigg(k+ \dfrac{\l(m-1)}{2}\bigg)^2 
= \sum_{k=0}^{\l -1} \bigg( k^2+ \l(m-1) k + \dfrac{\l^2(m-1)^2}{4} \bigg) \notag\\
& = \dfrac{(\l-1)\l(2\l-1)}{6} + \dfrac{(\l-1)\l^2 (m-1)}{2} + \dfrac{\l^3(m-1)^2}{4} \notag\\
& = \dfrac{\l(3\l^2 m^2 - 6\l m+ \l^2+2)}{12} = \dfrac{\l \big[3(\l m -1)^2 +(\l^2-1)\big]}{12} \notag\\
& = \dfrac{\l \big[3n^2 +(\l^2-1)\big]}{12}.
\end{align*}
Thus, it follows from Kac-Rice's formula \eqref{Kac} and after simplification that
\begin{align*}
\E [ N_n^* (0,2\pi) ]
& = \frac{1}{\pi} \displaystyle \int_{0}^{2\pi} \dfrac{\sqrt{\as(x)\cs(x) - \bs(x)^2}}{\as(x)} \, dx
=  \sqrt{n^2 +\dfrac{ \l^2 -1}{3} },
\end{align*}
where $ N_n^* (0,2\pi) $ denotes the number of zeros of $ T_n^* $ in $ (0,2\pi) $.
Let us define 
\begin{equation} \label{phim}
\varphi_m(x):= \dfrac{\sin (m\l x/2)}{\sin(\l x/2)} .
\end{equation}
We know that
\begin{equation*}
Z(\varphi_m) \cap [0,2\pi/\l]= \{ 2j\pi/m\l : 1 \leqslant j \leqslant m-1\}.
\end{equation*}
Therefore, $ \varphi_m $ has $ \l(m-1) = n+1-\l$ zeros in $ [0,2\pi] $. Taking into account these $  n+1-\l $ deterministic zeros of $ \varphi_m $, we have 
\begin{align*}  
\E [ N_{n} \ztp ] 
& = n+1-\l + \E [ N_{n}^* \ztp ] = n+1-\l+\sqrt{n^2 +\dfrac{ \l^2 -1}{3} }. \qedhere
\end{align*}
\end{proof}

\currentpdfbookmark{Proof of Theorem 2.2}{PF 2.2}
\begin{proof}[Proof of Theorem \ref{PeriodicVn}]
Fix $ a \in (0,1/2), $ and define $ E = [0,\pi] \setminus F $, where $ F = [0, n^{-a}) \cup (\pi - n^{-a}, \pi ] $.
For $ x \in [0,\pi]  $, we apply \eqref{P3c1} and observe that
\begin{align*}
V_n(x) & = \sum _{j=0} ^{n} a_{j} \cos (j x) 
= \sum_{k=0}^{\l-1} a_{k} \sum_{j=0}^{m-1}\cos(k+\l j)x \\
& = \dfrac{\sin (m\l x/2)}{\sin (\l x/2)} \sum_{k=0}^{\l-1} a_{k} \cos (k+ (m-1)\l/2)x .
\end{align*}
If we set $ \l=1 $, it is easy to check that $ V_n $ has exactly $ 2n $ deterministic zeros in on period, so we may assume that $ \l \in \N \setminus \{1\} $. 
Noe, recalling \eqref{phim}, we can write $ V_n(x) = \fim V_n^* (x) $,
where 
\begin{equation*}
V_n^* (x) := \sum_{k=0}^{\l-1} a_{k} \cos (k+ (m-1)\l/2)x  .
\end{equation*}
To discuss the expected number of real zeros of $ V_n^* $ in $ E $, we compute $ \as $, $ \bs $ and $ \cs $.
First, for $ x \in E $, we apply \eqref{P3c1} and obtain
\begin{align} \label{P3c3}
A_n^*(x) 
& =  \sum_{k=0}^{\l -1} \cos^2 (k+ (m-1)\l/2)x 
= \dfrac{1}{2} \sum_{k=0}^{\l -1} \big[1+\cos (2k+ (m-1)\l)x \big]  \notag \\ 
& = \dfrac{1}{2} \bigg( \l + \frac{ \sin(\l x) \cos (\l m -1)x}{\sin(x)} \bigg) =  \dfrac{\l ( 1 +  u_{\l}( x) \cos(nx))}{2}  ,
\end{align}
where  $ \ul (x) :=\sin(\l x)/\l \sin(x) $. 
Note that Markov's inequality (see Theorem 15.1.4 of \cite{RS2002}) guarantees that $ \abs{u_{\l}(x)} < 1 $ on $  E $ implying that $ A_n^* >0 $ on $ E $.
Moreover,
\begin{align*} 
B_n^*(x) 
& = - \sum_{k=0}^{\l -1} [k+ (m-1)\l/2] \sin (k+ (m-1)\l/2)x \, \cos (k+ (m-1)\l/2)x \\
& = - \dfrac{1}{2} \sum_{k=0}^{\l -1} [k+ (m-1)\l/2] \sin (2k+(m-1)\l)x \\ 
& = - \dfrac{\l m}{4} \sum_{k=0}^{\l -1} \sin (2k+(m-1)\l)x - \dfrac{1}{2} \sum_{k=0}^{\l -1} [k-\l/2] \sin (2k+(m-1)\l)x\\
& = - \dfrac{\l m}{4} \sum_{k=0}^{\l -1} \sin (2k+(m-1)\l)x +\O  (1)\\
& = - \dfrac{\l m \sin(\l x) \sin (\l m -1)x}{4\sin(x)}  +\O  (1)
= - \dfrac{\l^2 m u_{\l}( x) \sin(nx)}{4} +\O  (1),
\end{align*}
where the last sum is obtained with help of \eqref{P3c2}.
Therefore,
\begin{equation} \label{P3c4}
B_n^*(x) 
= - \dfrac{\l  n u_{\l}( x) \sin(nx)}{4}  +\O  (1), \asntoinfty \ \text{and} \ x \in E.
\end{equation}
In addition, with help of \eqref{P3c1}, we see 
\begin{align*} 
C_n^*(x) 
& =  \sum_{k=0}^{\l -1} [k+ (m-1)\l/2]^2 \sin^2 (k+ (m-1)\l/2)x \\
& 
= \dfrac{1}{2} \sum_{k=0}^{\l -1} [k+ (m-1)\l/2]^2 (1-\cos (2k+ (m-1)\l)x)   \notag \\ 
& = \dfrac{\l^2 m^2}{8}  \bigg(\l - \frac{ \sin(\l x) \cos (\l m -1)x}{\sin(x)}\bigg) 
+\O  (m) =  \dfrac{\l^3 m^2 (1 -u_{\l}(x) \cos(nx))}{8}  
+\O  (m).
\end{align*}
Hence
\begin{equation} \label{P3c5}
C_n^*(x)  = \frac{\l n^2 ( 1-   \ul(x) \cos(nx) ) }{8} 
+ \O  (n), \asntoinfty \ \text{and} \ x \in E.
\end{equation}
Thus, \eqref{P3c3}--\eqref{P3c5} imply that
\begin{align*} 
\Delta_n^*(x) := A_n^*(x) C_n^*(x) - B_n^*(x)^2  
& = \frac{\l^2 n^2 ( 1- \ul^2(x))}{16}  + \O (n), \asntoinfty \ \text{and} \ x \in E.
\end{align*}
Analogous to the proof of Lemma 3.4 of \cite{Pir2021}, we may show that, for $  x \in E $,
\begin{align*} 
\Delta_n^*(x) 
& = \frac{\l^2 n^2 ( 1- \ul^2(x) )\big[1+ \O (n^{-1+2a})\big]}{16}  , \asntoinfty.
\end{align*}
Therefore, 
\begin{align*} 
\dfrac{\sqrt{\Delta_n^*(x) } }{A_n^*(x)}
& = \frac{ n \big[1 + \O (n^{-1+2a})\big]}{2} \times \dfrac{\sqrt{1-\ul^2(x)}}{1+ \ul(x) \cos(nx)},  \asntoinfty \ \text{and} \ x \in E.
\end{align*}

From this point on, the proof is very close to that of Theorem 2.1 of \cite{Pir2021}. 
Set $ \dis G= E \cap [0,\pi/2]=[n^{-a},\pi/2] $.
Similar to the proof of Lemma 3.5 of \cite{Pir2021}, we can easily show that, as $ n $ tends to infinity,
\begin{align*} 
\E [ N^*_{n} \ztp ] & = 
\begin{cases}
\dfrac{\big[n+ \O (n^{2a})\big] \big[\J_{\l}^+(n)+\J_{\l}^-(n)\big]}{\pi } + \O (n^{1-a}), & \text{if $n-\l$ is even,} \\
& \\
\dfrac{\big[2n+ \O (n^{2a})\big] \J_{\l}^+(n)}{\pi }  + \O (n^{1-a}), & \text{if $n-\l$ is odd},
\end{cases}
\end{align*}
where 
\begin{align*}
\J_{\l}^{+} (n) := \int_{G} f_{n}^{+} (x) \, dx, 
\quad & \text{and} \quad  \J_{\l}^{-}(n) := \int_{G} f_{n}^{-} (x) \, dx,
\end{align*}
with
\begin{align*}
f_{n}^{+} (x)  
:=  \dfrac{\sqrt{1-\ul^2(x)}}{1+ \ul(x) \cos(nx)}, \quad
& \text{and}  \quad
f_{n}^{-} (x) := 
\dfrac{\sqrt{1-\ul^2(x)}}{1- \ul(x) \cos(nx)} . 
\end{align*}
Note that while proving Lemmas 3.6 and 3.7 of \cite{Pir2021}, we showed 
\begin{align*}
& \int_{0}^{n^{-a}} \f_{n}^+ (x) \, dx = \O (n^{-a}),
& & \text{and}  
& \int_{\pi/2n}^{n^{-a}}  \f_{n}^- (x) \, dx = \O (n^{-a}) .
\end{align*}
Likewise Lemma 3.8 of \cite{Pir2021} we can write
\begin{align} \label{P3c6}
\E [ N^*_{n} \ztp ] & = 
\begin{cases}
\big[n+ \O (n^{2a})\big] \big[\I_{\l}^+(n)+\I_{\l}^-(n)\big] + \O (n^{1-a}), & \text{if $n-\l$ is even,} \\
& \\
\big[2n+ \O (n^{2a})\big] \I_{\l}^+(n) + \O (n^{1-a}), & \text{if $n-\l$ is odd},
\end{cases}
\end{align}
where 
\begin{align*}
& \I_{\l}^{+}(n) := \dfrac{1}{\pi} \displaystyle \int_{0}^{\pi/2} f_{n}^{+} (x),
& & \text{and}  
& \I_{\l}^{-}(n) := \dfrac{1}{\pi} \displaystyle \int_{\pi/2n}^{\pi/2} f_{n}^{-} (x) \, dx .
\end{align*}
We use the same method established through Lemmas 3.9--3.11 of \cite{Pir2021} to show that
\begin{align*}
\I_{\l}^{-}(n) 
& =\dfrac{1}{\pi} \displaystyle \int_{\pi/2n}^{\pi/2} \dfrac{\sqrt{1-\ul^2(x)}}{1- \ul(x) \cos(nx)} \, dx \\
& =  \frac{1}{\pi ^2} \displaystyle \int_{ 0 }^{ \pi } \int_{ 0 }^{ \pi /2 }
\dfrac{\sqrt{1-\ul^2(s)}}{1- \ul(s) \cos(t)} \, ds \, dt +  \O (n^{-a}) \\
& = \frac{1}{\pi ^2} \displaystyle \int_{ 0 }^{ \pi/2 } \int_{ 0 }^{ \pi }
\dfrac{ \sqrt{1-\ul^2(s)} }{  1+  \ul(s) \cos( t ) } \, dt \, ds +  \O (n^{-a}) ,
\end{align*}
where the interchange of integration order is justified by the Fubini-Tonelli Theorem. Now, applying (2.1) of \cite{Pir2021} yields
\begin{equation*}
\int_{ 0 }^{ \pi }
\dfrac{dt}{1- \ul(s) \cos(t)}= \dfrac{\pi}{\sqrt{1-\ul^2(s)}},
\end{equation*}
which gives us
\begin{align*}
\I_{\l}^{-}(n) 
& =\frac 1 2 +  \O (n^{-a}) .
\end{align*}
Similarly, we have
\begin{align*}
\I_{\l}^{+}(n) 
& =\frac 1 2 +  \O (n^{-a}) .
\end{align*}
Plugging these two into \eqref{P3c6}, we may write
\begin{align*}
\E [ N^*_{n} \ztp ] & = n + \O (n^{2a}) + \O (n^{1-a}).
\end{align*}
It is clear that the best estimate in above occurs when $ a=1/3 $. Thus,
\begin{align*}
\E [ N^*_{n} \ztp ] & = n + \O (n^{2/3}).
\end{align*}
At last, considering $n+1-\l$ distinct (deterministic) roots of $ \varphi_m $, in one period, we have
\begin{align*}  
\E [ N_{n} \ztp ] 
& = n+1-\l + \E [ N_{n}^* \ztp ] = 2n + \O (n^{2/3}), \asntoinfty.
\end{align*}
\end{proof}
\currentpdfbookmark{Proof of Theorem 2.3}{here}
\label{here}
We prove Theorem \ref{PeriodicTn-nonzero} through a series of lemmas as follows. Keep in mind that in all of the following lemmas the implied constants   depend only on $ \l $.

\belowpdfbookmark{Lemma 3.1}{P2Lem3.1}
\begin{lemma}  \label{P2Lem3.1}
With the same assumptions as Theorem \ref{PeriodicTn-nonzero}, fix $ a \in (0,1/4) $ and let $E_0 = [0,\tfrac{2\pi}{\l}] \setminus F_0$, where $ F_0 = [0, \tfrac{2m^{-a}}{\l}) \cup (\tfrac{2\pi}{\l}- \tfrac{2m^{-a}}{\l},\tfrac{2\pi}{\l}] $.
Then
\begin{enumerate}[(1).] 
\item \label{P2Lem3.3-1}
$
0< A_n(x) = \l \fim^2 + r + 2r \fim \cos((m+1)\l x/2), \quad x \in E_{0}.
$ \\
\item \label{P2Lem3.3-2} 
\begin{multline*}
B_n(x)^2
 = \l^2 \fim^2 \fimp^2 + \dfrac{ r^2 m^2 \l^2 \cos^2((2m+1)\l x/2)}{4\sin^2(\l x/2)}  \\
  + \dfrac{ r m \l^2 \fim \fimp \cos((2m+1)\l x/2)}{\sin(\l x/2)} + \O (n^{1+4a}), \asntoinfty \ \text{and} \ x \in E_{0}.
\end{multline*}
\item \label{P2Lem3.3-3}
\begin{multline*}
C_n(x)
= \dfrac{m^2 \l^2 A_n(x)}{4}  + \dfrac{r m^2 \l^2 }{4} 
- rm \l \fimp \sin((m+1)\l x/2) \\
\quad + \l \fimp^2 + \O (n^{1+2a}), \asntoinfty \ \text{and} \ x \in E_{0}.
\end{multline*}
\end{enumerate}
where $ \varphi_m$ is as in \eqref{phim}, and $ A_n(x), \ B_n(x) $ and $ C_n(x) $ are defined in the same way as in Kac-Rice's formula \eqref{Kac}.
\end{lemma}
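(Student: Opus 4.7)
The plan is to exploit the $\l$-periodicity of the coefficients by decomposing $T_n$ according to residue classes modulo $\l$. Since $n = \l m - 1 + r$ with $0 < r < \l$, for each $k \in \{0, \ldots, \l-1\}$ the indices $\{j : j \equiv k \pmod{\l},\ 0 \le j \le n\}$ form an arithmetic progression of length $m+1$ when $k < r$ and of length $m$ when $k \ge r$. Writing $T_n = \sum_{k=0}^{\l-1}(a_k g_k + b_k h_k)$ and applying \eqref{P3c1}--\eqref{P3c2}, each $g_k$ (resp.\ $h_k$) collapses to $\varphi_{q_k}(x)\cos(\theta_k x)$ (resp.\ $\varphi_{q_k}(x)\sin(\theta_k x)$), with $q_k = m+1$, $\theta_k = k + m\l/2$ for $k < r$ and $q_k = m$, $\theta_k = k + (m-1)\l/2$ for $k \ge r$. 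Since the $2\l$ Gaussians $\{a_k, b_k\}_{k=0}^{\l-1}$ are i.i.d., Kac--Rice's formula \eqref{Kac} applies with these $2\l$ functions as the basis, giving $A_n = \sum_k(g_k^2 + h_k^2)$, $B_n = \sum_k(g_k g_k' + h_k h_k')$, and $C_n = \sum_k(g_k'^2 + h_k'^2)$.

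For \eqref{P2Lem3.3-1}, the identity $\cos^2+\sin^2=1$ immediately yields $A_n = (\l - r)\varphi_m^2 + r\varphi_{m+1}^2$. I would then invoke
\[
\varphi_{m+1}^2 - \varphi_m^2 = \dfrac{\sin((2m+1)\l x/2)}{\sin(\l x/2)} = 1 + 2\varphi_m \cos((m+1)\l x/2),
\]
which follows from $\sin^2 A - \sin^2 B = \sin(A - B)\sin(A + B)$ together with a product-to-sum formula, and substitute to reach the claimed form. Positivity on $E_0$ would then be confirmed by rewriting $A_n$ as a sum of squares, e.g.\ $(\l - r)\varphi_m^2 + r(\varphi_m + \cos((m+1)\l x/2))^2 + r\sin^2((m+1)\l x/2)$.

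For \eqref{P2Lem3.3-2}, the sine-cosine cross terms in $g_k g_k' + h_k h_k'$ cancel pairwise, so
\[
B_n = (\l - r)\varphi_m \varphi_m' + r\varphi_{m+1}\varphi_{m+1}' = \l\, \varphi_m \varphi_m' + \tfrac{r}{2}(\varphi_{m+1}^2 - \varphi_m^2)'.
\]
Differentiating the identity above and using the lower bound $\sin(\l x/2) \gtrsim m^{-a}$ on $E_0$ gives the leading contribution $\tfrac{r(2m+1)\l \cos((2m+1)\l x/2)}{4\sin(\l x/2)} + \O(m^{2a})$. Squaring $B_n$, together with the uniform estimates $|\varphi_m(x)| = \O(m^a)$ and $|\varphi_m'(x)| = \O(m^{1+a})$ valid on $E_0$, absorbs all the subleading contributions into $\O(n^{1+4a})$ and yields the claimed expansion.

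For \eqref{P2Lem3.3-3}, one has $g_k'^2 + h_k'^2 = \varphi_{q_k}'^2 + \theta_k^2 \varphi_{q_k}^2$, so
\[
C_n = (\l - r)\varphi_m'^2 + r\varphi_{m+1}'^2 + \varphi_m^2 \sum_{k=r}^{\l-1}(k + (m-1)\l/2)^2 + \varphi_{m+1}^2 \sum_{k=0}^{r-1}(k + m\l/2)^2.
\]
The two index sums evaluate to $(\l - r)m^2\l^2/4 + \O(m)$ and $rm^2\l^2/4 + \O(m)$ respectively, which folds the last two terms of $C_n$ into $\tfrac{m^2\l^2 A_n}{4} + \O(m^{1+2a})$. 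It remains to show
\[
(\l - r)\varphi_m'^2 + r\varphi_{m+1}'^2 = \l\varphi_m'^2 + \tfrac{rm^2\l^2}{4} - rm\l\,\varphi_m'\sin((m+1)\l x/2) + \O(m^{1+2a}),
\]
which I expect to be the main obstacle: one would expand $\varphi_{m+1}'$ via the recursion $\varphi_{m+1} = \varphi_m \cos(\l x/2) + \cos(m\l x/2)$, square, and then combine with the angle-addition identity $\sin((m+1)\l x/2) = \cos(\l x/2)\sin(m\l x/2) + \sin(\l x/2)\cos(m\l x/2)$ and the refined expansion $\varphi_m'\sin(\l x/2) = (m\l/2)\cos(m\l x/2) + \O(m^a)$ to show that the several seemingly $\O(m^2)$ oscillating discrepancies telescope down to the advertised constant modulo the allowed error.
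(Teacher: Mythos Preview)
Your proposal is correct and in fact streamlines the paper's argument. The paper writes the basis functions for $k<r$ as $\varphi_m(x)\cos(k+(m-1)\l/2)x+\cos(m\l+k)x$ (and the analogous sine), keeping the ``extra'' term separate; it then expands every square and cross term by hand, producing the long displays that lead to \eqref{P3c7}, \eqref{P3c10}, \eqref{P3c12}. You instead absorb that extra term by using $\varphi_{m+1}(x)\cos(k+m\l/2)x$, so that every basis function has the uniform shape $\varphi_{q_k}\cos(\theta_k x)$ or $\varphi_{q_k}\sin(\theta_k x)$. The $\cos^2+\sin^2$ cancellation then immediately gives the closed forms
\[
A_n=(\l-r)\varphi_m^2+r\varphi_{m+1}^2,\qquad B_n=\tfrac12 A_n',\qquad C_n=(\l-r)\varphi_m'^2+r\varphi_{m+1}'^2+\sum_k\theta_k^2\varphi_{q_k}^2,
\]
and the three parts of the lemma reduce to the single identity $\varphi_{m+1}^2-\varphi_m^2=1+2\varphi_m\cos((m+1)\l x/2)$ together with the estimate $\varphi_m'\sin(\l x/2)=\tfrac{m\l}{2}\cos(m\l x/2)+\O(m^{2a})$, exactly as you outline. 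The final verification you flag for part~(3) does go through: after expanding $\varphi_{m+1}'$ via the recursion and subtracting, the apparent $m^2$ discrepancy is $\tfrac{m^2\l^2}{2}\cos^2(m\l x/2)-m\l\,\varphi_m'\sin(\l x/2)\cos(m\l x/2)$, which is $\O(m^{1+2a})$ by that same estimate. What your parametrization buys is that the cross terms the paper has to chase individually never appear; what the paper's parametrization buys is that it avoids introducing $\varphi_{m+1}$ and works entirely with $\varphi_m$ and elementary trigonometric terms.
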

\begin{proof}
Since the coefficients are $ \l $-periodic, we can write
\begin{align*}
T_n(x)  & = \sum _{j=0} ^{n} a_{j} \cos (j x) + b_{j} \sin (j x) \\
& = \sum_{k=0}^{\l-1} a_{k} \sum_{j=0}^{m-1}\cos(k+\l j)x
+ \sum_{k=0}^{\l-1} b_{k} \sum_{j=0}^{m-1}\sin(k+\l j)x \\
 & \quad  + \sum_{k=0}^{r-1} a_{k} \cos(m\l+k)x
+ \sum_{k=0}^{r-1} b_{k} \sin(m\l+k)x.
\end{align*}
We apply \eqref{P3c1} and \eqref{P3c2} to simplify the above as
\begin{align*}
T_n(x) & =  \sum_{k=0}^{\l-1} a_{k} \fim \cos(k+(m-1)\l/2)x
+ \sum_{k=0}^{\l-1} b_{k} \fim \sin(k+(m-1)\l/2)x \\
& \quad + \sum_{k=0}^{r-1} a_{k} \cos(m\l+k)x
+ \sum_{k=0}^{r-1} b_{k} \sin(m\l+k)x \\
& =  \sum_{k=0}^{r-1} a_{k} [\fim \cos(k+(m-1)\l/2)x + \cos(m\l + k)x] \\
& \quad+ \sum_{k=0}^{r-1} b_{k} [\fim \sin(k+(m-1)\l/2)x + \sin(m\l + k)x] \\
& \quad+ \sum_{k=r}^{\l-1} a_{k} \fim \cos(k+(m-1)\l/2)x
+ \sum_{k=r}^{\l-1} b_{k} \fim \sin(k+(m-1)\l/2)x.
\end{align*}

\noindent \emph{{Proof of (1)}}. For $ x \in E_0 $, we have
\begin{align*}
A_n(x) & =  \sum_{k=0}^{r-1}  [\fim \cos(k+(m-1)\l/2)x + \cos(m\l + k)x]^2 \\
& \quad+ \sum_{k=0}^{r-1} [\fim \sin(k+(m-1)\l/2)x + \sin(m\l + k)x]^2 \\
& \quad+ \sum_{k=r}^{\l-1}  \fim^2 \cos^2(k+(m-1)\l/2)x
+ \sum_{k=r}^{\l-1} \fim^2 \sin^2(k+(m-1)\l/2)x \\
& = \l \fim^2 + r +2 \fim \sum_{k=0}^{r-1} \cos(k+(m-1)\l/2)x \cos(m\l + k)x \\
& \quad +2 \fim \sum_{k=0}^{r-1} \sin(k+(m-1)\l/2)x \sin(m\l + k)x .
\end{align*}
Thus,
\begin{align} \label{P3c7}
	A_n(x) & =  \l \fim^2 + r + 2r \fim \cos((m+1)\l x/2), \quad x \in E_{0}.
\end{align}
We also note that since $ 0<r<\l $, 
\begin{align*} 
A_n(x) & =  \l \bigg(\fim +\dfrac{r \cos((m+1)\l x/2)}{\l}\bigg)^2
+ r \bigg(1-\dfrac{r \cos^2((m+1)\l x/2)}{\l}\bigg)>0, \quad x \in E_{0}.
\end{align*}

\noindent \emph{{Proof of (2)}}. For $ x \in E_0 $, we observe that
\begin{align*}
& B_n(x) =  \sum_{k=0}^{r-1}  [\fim \cos(k+(m-1)\l/2)x + \cos(m\l + k)x] \\
& \times [\fimp \cos(k+(m-1)\l/2)x - \fim (k+(m-1)\l/2) \sin(k+(m-1)\l/2)x -
(m\l + k)\sin(m\l + k)x ]\\
& + \sum_{k=0}^{r-1}  [\fim \sin(k+(m-1)\l/2)x + \sin(m\l + k)x] \\
& \times [\fimp \sin(k+(m-1)\l/2)x + \fim (k+(m-1)\l/2) \cos(k+(m-1)\l/2)x +
(m\l + k)\cos(m\l + k)x ]\\
& + \sum_{k=r}^{\l-1}  \fim \cos(k+(m-1)\l/2)x \\
& \times [\fimp \cos(k+(m-1)\l/2)x - \fim (k+(m-1)\l/2) \sin(k+(m-1)\l/2)x]\\
& + \sum_{k=r}^{\l-1}  \fim \sin(k+(m-1)\l/2)x \\
& \times [\fimp \sin(k+(m-1)\l/2)x + \fim (k+(m-1)\l/2) \cos(k+(m-1)\l/2)x].
\end{align*}
After simplification, we obtain
\begin{align*}
B_n(x) & = \l \fim \fimp  + r \fimp \cos((m+1)\l x/2) \\
& \quad - \fim \sum_{k=0}^{r-1}  (m\l + k) \sin((m+1)\l x/2) 
+  \fim \sum_{k=0}^{r-1}  (k+(m-1)\l/2) \sin((m+1)\l x/2) \\
& = \l \fim \fimp + r \fimp \cos((m+1)\l x/2) - \dfrac{ r (m+1) \l \fim \sin((m+1)\l x/2)}{2}.
\end{align*}
We know that
\begin{align*}
\fimp =\dfrac{m\l \cos(m\l x/2)}{2\sin(\l x/2)} - \dfrac{\l \sin(m\l x/2) \cos(\l x/2)}{2\sin^2(\l x/2)}.
\end{align*}
Therefore, since $ \csc(\l x/2) = \O (n^{a})$ on $ E_{0} $, we may write 
\begin{align} \label{P3c8}
\fimp =\dfrac{m\l \cos(m\l x/2)}{2\sin(\l x/2)} +  \O (n^{2a}), \asntoinfty \ \text{and} \ x \in E_{0},
\end{align}
or if needed as in some occasions
\begin{align} \label{P3c9}
\fimp =m\O (n^{a}) +  \O (n^{2a})= \O (n^{1+a}) , \asntoinfty \ \text{and} \ x \in E_{0}.
\end{align}
Thus, we may rewrite $ B_n $ as
\begin{align} \label{P3c10}
B_n(x) & =  \l \fim \fimp + \dfrac{r m\l \cos(m\l x/2) \cos((m+1)\l x/2)}{2\sin(\l x/2)} +  \O (n^{2a}) \notag \\
& \quad - \dfrac{ r m \l  \sin(m\l x/2) \sin((m+1)\l x/2)}{2\sin(\l x/2)} +  \O (n^{a}) \notag \\
& = \l \fim \fimp + \dfrac{r m\l \cos((2m+1)\l x/2)}{2\sin(\l x/2)} +  \O (n^{2a})
, \asntoinfty \ \text{and} \ x \in E_{0}.
\end{align}
It follows from \eqref{P3c8}--\eqref{P3c10} and after simplification that
\begin{multline} \label{P3c11}
B_n(x)^2
 = \l^2 \fim^2 \fimp^2 + \dfrac{ r^2 m^2 \l^2 \cos^2((2m+1)\l x/2)}{4\sin^2(\l x/2)}  \\
+ \dfrac{ r m \l^2 \fim \fimp \cos((2m+1)\l x/2)}{\sin(\l x/2)} + \O (n^{1+4a}), \asntoinfty \ \text{and} \ x \in E_{0}.
\end{multline}
\noindent \emph{{Proof of (3)}}. For $ x \in E_0 $, we see that
\begin{align*}
C_n(x) & = \sum_{k=0}^{r-1}  [\fimp \cos(k+(m-1)\l/2)x - \fim (k+(m-1)\l/2) \sin(k+(m-1)\l/2)x  \\ 
& \quad- (m\l + k)\sin(m\l + k)x ]^2\\
& \quad+ \sum_{k=0}^{r-1}  [\fimp \sin(k+(m-1)\l/2)x + \fim (k+(m-1)\l/2) \cos(k+(m-1)\l/2)x \\
& \quad+ (m\l + k)\cos(m\l + k)x ]^2\\
& \quad+ \sum_{k=r}^{\l-1}  [\fimp \cos(k+(m-1)\l/2)x - \fim (k+(m-1)\l/2) \sin(k+(m-1)\l/2)x]^2\\
& \quad+ \sum_{k=r}^{\l-1}  [\fimp \sin(k+(m-1)\l/2)x + \fim (k+(m-1)\l/2) \cos(k+(m-1)\l/2)x]^2.
\end{align*}
Therefore, after simplification, we have
\begin{align*}
C_n(x) & = \l \fimp^2 + \fim^2 \sum_{k=0}^{\l-1} (k+(m-1)\l/2)^2 + \sum_{k=0}^{r-1} (k+m\l)^2 \\
& \quad -2 \fimp \sum_{k=0}^{r-1}  (m\l + k) \sin((m+1)\l x/2) \\
& \quad + 2 \fim \sum_{k=0}^{r-1} (m\l + k) (k+(m-1)\l/2) \cos((m+1)\l x/2) .
\end{align*}
It follows from \eqref{P3c9} along with the estimate $ \fim = \O (n^{a}) $ that, for $ x \in E_0 $, 
\begin{align*}
C_n(x) & = \l \fimp^2 + \dfrac{m^2 \l^3 \fim^2}{4} + \O (n^{1+2a}) + r m^2\l^2 + \O (n)  \\
& \quad-2 rm\l \fimp \sin((m+1)\l x/2) + \O (n^{1+a}) \\
& \quad + r m^2\l^2 \fim  \cos((m+1)\l x/2) + \O (n^{1+a})\\
& = \l \fimp^2 + \dfrac{m^2 \l^3 \fim^2}{4} + r m^2\l^2  -2 rm\l \fimp \sin((m+1)\l x/2) \\
& \quad + r m^2\l^2 \fim  \cos((m+1)\l x/2)  + \O (n^{1+2a}) \\
& =  \dfrac{m^2\l^2 [\l \fim^2+ r + 2r\fim \cos((m+1)\l x/2) ]}{4} \\
& \quad +  \dfrac{3rm^2\l^2}{4} + \dfrac{rm^2\l^2 \fim \cos((m+1)\l x/2)}{2}\\
& \quad -2 rm\l \fimp \sin((m+1)\l x/2)  + \l \fimp^2
+ \O (n^{1+2a}) \\
& =  \dfrac{m^2\l^2 A_n(x)}{4} + \dfrac{3rm^2\l^2}{4} + \dfrac{rm^2\l^2 \fim \cos((m+1)\l x/2)}{2} \\
& \quad -2 rm\l \fimp \sin((m+1)\l x/2)  + \l \fimp^2 + \O (n^{1+2a}).
\end{align*}
It is also clear that
\begin{align*}
& \dfrac{3rm^2\l^2}{4} + \dfrac{rm^2\l^2 \fim \cos((m+1)\l x/2)}{2} - rm\l \fimp \sin((m+1)\l x/2) \\
& = \dfrac{3rm^2\l^2}{4} + \dfrac{rm^2\l^2 \sin(m\l x/2) \cos((m+1)\l x/2)}{2 \sin(\l x/2)} \\
& \quad - \dfrac{rm^2\l^2 \cos(m\l x/2) \sin((m+1)\l x/2)}{2 \sin(\l x/2)}  + \O (n^{1+a}) \\
& = \dfrac{3rm^2\l^2}{4} -\dfrac{rm^2\l^2}{2} + \O (n^{1+a}) = \dfrac{rm^2\l^2}{4} + \O (n^{1+a}).
\end{align*}
Thus, we may simplify $ C_n $ as 
\begin{multline} \label{P3c12}
C_n(x)
 = \dfrac{m^2 \l^2 A_n(x)}{4}  + \dfrac{r m^2 \l^2 }{4} 
- rm \l \fimp \sin((m+1)\l x/2)  \\
 + \l \fimp^2  + \O (n^{1+2a}), \asntoinfty \ \text{and} \ x \in E_{0},
\end{multline}
as required.
\end{proof}

\belowpdfbookmark{Lemma 3.2}{P2Lem3.2}
\begin{lemma} \label{P2Lem3.2}
With the same assumptions as Lemma \ref{P2Lem3.1}, we have
\begin{align*} 
& \E[N_n ([0,2\pi/\l])] 
= \frac{m}{\pi} \displaystyle \int_{\tfrac{\pi}{m}}^{\pi-\tfrac{\pi}{m}}
\sqrt{1+  \dfrac{ r (\l-r) \sin^2(x) }{  [(\l-r) \sin^2(mx) + r \sin^2(m+1)x]^2}} \, dx + \O (n^{4/5}), \asntoinfty .
\end{align*} 
\end{lemma}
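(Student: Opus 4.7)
My plan is to combine the Kac-Rice formula \eqref{Kac} with the estimates from Lemma \ref{P2Lem3.1} and then perform a change of variables. Since the positivity $A_n(y) > 0$ shown in part (1) of Lemma \ref{P2Lem3.1} actually holds on all of $[0, 2\pi/\ell]$ (because $r(1 - r\cos^2(\cdot)/\ell) \geq r(1 - r/\ell) > 0$), the formula applies on any subinterval. I would apply it on $E_0$, where the estimates of parts (2)--(3) are effective, and bound $\E[N_n(F_0)] = O(n^{1-a})$ separately by an argument parallel to the one used in the proof of Theorem \ref{PeriodicVn} above.

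The crux of the proof is the identity, valid on $E_0$,
\[
A_n(y) C_n(y) - B_n(y)^2 = \frac{m^2 \ell^2}{4} A_n(y)^2 + \frac{m^2 \ell^2\, r(\ell - r)}{4\, \sin^2(\ell y / 2)} + O(n^{1+4a}) \asntoinfty.
\]
To verify this, I would substitute the expressions from parts (1)--(3) of Lemma \ref{P2Lem3.1}, replace $\varphi'_m(y)$ by the leading-order approximation in \eqref{P3c8}, and use the product-to-sum identity $\cos((2m+1)\ell y/2) = \cos((m+1)\ell y/2)\cos(m\ell y/2) - \sin((m+1)\ell y/2)\sin(m\ell y/2)$ to match the cross terms in $B_n^2$ against the terms linear in $A_n$ coming from $A_n C_n$. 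The leading $\tfrac{m^2\ell^2}{4} A_n^2$ appears immediately from $A_n$ times the $\tfrac{m^2\ell^2 A_n}{4}$ summand in $C_n$, and after all cancellations only the residual $r(\ell-r)/\sin^2(\ell y/2)$ term should survive.

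The next ingredient is the algebraic identity
\[
(\ell - r)\sin^2(mt) + r\sin^2((m+1)t) = \sin^2(t)\, A_n(2t/\ell),
\]
which follows from the decomposition $\sin^2((m+1)t) - \sin^2(mt) = \sin(t)\sin((2m+1)t)$ combined with the formula for $A_n$ from Lemma \ref{P2Lem3.1}(1) rewritten with $t = \ell y / 2$. Dividing the previous expression for $\Delta_n := A_n C_n - B_n^2$ by $A_n^2$ (bounded below by a positive constant on $E_0$), taking the square root, and using $\sqrt{\Delta_n/A_n^2} \geq m\ell/2$ to convert the additive $O(n^{1+4a})$ error into an $O(n^{4a})$ error in the integrand, one obtains
\[
\frac{\sqrt{\Delta_n(y)}}{A_n(y)} = \frac{m \ell}{2}\sqrt{1 + \frac{r(\ell-r)\sin^2(\ell y/2)}{[(\ell-r)\sin^2(m\ell y/2) + r\sin^2((m+1)\ell y/2)]^2}} + O(n^{4a}).
\]

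To finish, I would substitute $t = \ell y / 2$ in $\frac{1}{\pi}\int_{E_0}\sqrt{\Delta_n}/A_n\, dy$, turning it into $\frac{m}{\pi}\int_{m^{-a}}^{\pi - m^{-a}}[\,\cdots\,]\, dt$, and then extend the range of integration to $[\pi/m, \pi - \pi/m]$. A direct analysis of the target integrand near each zero $t = k\pi/m$ of $\sin(mt)$ shows that each peak has height $O(m/k)$ and width $O(k/m^2)$, contributing $O(1/m)$ to the $t$-integral; summing over the $O(m^{1-a})$ peaks in the extension region yields $O(m^{-a})$ in $t$, i.e., $O(n^{1-a})$ after the $m/\pi$ prefactor. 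Combining with the interior error $O(n^{4a})$ and the bound $\E[N_n(F_0)] = O(n^{1-a})$, the total error is $O(n^{1-a} + n^{4a})$, optimized at $a = 1/5$ to yield $O(n^{4/5})$. The principal obstacle is the algebraic simplification of $\Delta_n$ in the first display: the cross terms built from $\sin$ and $\cos$ of $m\ell y/2$, $(m+1)\ell y/2$, and $(2m+1)\ell y/2$ must collapse cleanly to $A_n^2$ plus the single $r(\ell-r)/\sin^2(\ell y/2)$ residual, which requires careful bookkeeping.
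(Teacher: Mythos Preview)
Your proposal is correct and follows essentially the same route as the paper: derive the key simplification $\Delta_n = \tfrac{m^2\ell^2}{4}\big[A_n^2 + r(\ell-r)/\sin^2(\ell y/2)\big] + O(n^{1+4a})$, use the identity $\sin^2(t)\,A_n(2t/\ell) = (\ell-r)\sin^2(mt) + r\sin^2((m+1)t)$, change variables $t = \ell y/2$, handle the extension of the integration range by a peak-by-peak estimate, bound $\E[N_n(F_0)] = O(n^{1-a})$, and optimize at $a = 1/5$. The only cosmetic differences are that the paper tracks the error multiplicatively as $[1 + O(n^{-1+4a})]$ rather than additively as $O(n^{4a})$ in the integrand (equivalent once one uses that the integral is $O(1)$), and it controls the endpoint extension via the identity $\sin^2(mx)+\sin^2((m+1)x) = 1 - \cos x\,\cos(2m+1)x$ together with an existing lemma rather than your direct height/width analysis.
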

\begin{proof}
It follows from \eqref{P3c7} and \eqref{P3c12} that, for $ x \in E_0 $,
\begin{align} \label{P3c13}
A_n(x)C_n(x)
& = \dfrac{m^2 \l^2 A_n(x)^2}{4}  + A_n(x) \bigg[\dfrac{r m^2 \l^2 }{4} 
- rm \l \fimp \sin((m+1)\l x/2) + \l \fimp^2\bigg]  + \O (n^{1+4a}).
\end{align}
We note that
\begin{align} \label{P3c14}
& A_n(x) \bigg[\dfrac{r m^2 \l^2 }{4} - rm \l \fimp \sin((m+1)\l x/2) + \l \fimp^2\bigg] \notag \\
& = \bigg[ \l \fim^2 + r + 2r \fim \cos((m+1)\l x/2) \bigg] \bigg[\dfrac{r m^2 \l^2 }{4} - rm \l \fimp \sin((m+1)\l x/2) + \l \fimp^2\bigg] \notag \\
& = \l^2 \fim^2 \fimp^2 + \bigg[ \dfrac{r m^2 \l^3 \fim^2 }{4} + r \l \fimp^2 \bigg] \notag \\
& \quad + \bigg[ \dfrac{r^2 m^2 \l^2 }{4} - r^2 m \l \fimp \sin((m+1)\l x/2) + 
\dfrac{r^2 m^2 \l^2 \fim \cos((m+1)\l x/2)}{2}  \bigg] \notag \\
& \quad + \big[ 2r \l \fim \fimp^2 \cos((m+1)\l x/2) - 
r m \l^2 \fim^2 \fimp \sin((m+1)\l x/2)  \big] \notag \\
& \quad - \big[ 2r^2 m \l \fim \fimp \sin((m+1)\l x/2) \cos((m+1)\l x/2) \big].
\end{align}
We use \eqref{P3c8} and whenever needed \eqref{P3c9} to simplify each $ [\, \cdot \,] $ in \eqref{P3c14} in order to obtain a simplified estimate for $ \eqref{P3c13} $. It is clear that
\begin{align} \label{P3c15}
\dfrac{r m^2 \l^3 \fim^2 }{4} + r \l \fimp^2 
& = \dfrac{r m^2 \l^3 \sin^2(m\l x/2) }{4 \sin^2(\l x/2)} + \dfrac{r m^2 \l^3 \cos^2(m\l x/2) }{4 \sin^2(\l x/2)} + \O (n^{1+3a}) \notag \\
& = \dfrac{r m^2 \l^3 }{4 \sin^2(\l x/2)} + \O (n^{1+3a}), \asntoinfty.
\end{align}
It is also easy to show that 
\begin{align} \label{P3c16}
& \dfrac{r^2 m^2 \l^2 }{4} - r^2 m \l \fimp \sin((m+1)\l x/2) + 
\dfrac{r^2 m^2 \l^2 \fim \cos((m+1)\l x/2)}{2}  \notag \\
& = \dfrac{r^2 m^2 \l^2 }{4} - \dfrac{r^2 m^2 \l^2 }{2} 
\bigg[ \dfrac{\cos(m\l x/2) \sin((m+1)\l x/2) - \sin(m\l x/2) \cos((m+1)\l x/2) }{ \sin(\l x/2)} \bigg] \notag \\
& \quad + \O (n^{1+2a})  \notag \\
& = \dfrac{r^2 m^2 \l^2 }{4} - \dfrac{r^2 m^2 \l^2 }{2} + \O (n^{1+2a})
= - \dfrac{r^2 m^2 \l^2 }{4} + \O (n^{1+2a}).
\end{align}
Moreover,
\begin{align} \label{P3c17}
& 2r \l \fim \fimp^2 \cos((m+1)\l x/2) - r m \l^2 \fim^2 \fimp \sin((m+1)\l x/2)  \notag \\
& = r \l \fim \fimp \bigg[ \dfrac{m \l \cos(m\l x/2) \cos((m+1)\l x/2) }{ \sin(\l x/2)} + \O (n^{2a}) 
- \dfrac{ m \l \sin(m\l x/2) \sin((m+1)\l x/2) }{ \sin(\l x/2)} \bigg] \notag \\
& = \dfrac{r m \l^2 \fim \fimp \cos((2m+1)\l x/2) }{ \sin(\l x/2)} + \O (n^{1+4a}).
\end{align}
At last,
\begin{align} \label{P3c18}
& 2r^2 m \l \fim \fimp \sin((m+1)\l x/2) \cos((m+1)\l x/2) \notag \\
& =  r^2 m^2 \l^2  \bigg[ \dfrac{\cos(m\l x/2) \sin((m+1)\l x/2)}{\sin(\l x/2)} \bigg] \bigg[ \dfrac{\sin(m\l x/2) \cos((m+1)\l x/2)}{\sin(\l x/2)} \bigg] + \O (n^{1+3a}) \notag \\
& =  r^2 m^2 \l^2  \bigg[ \dfrac{ \sin((2m+1)\l x/2) + \sin(\l x/2)}{2\sin(\l x/2)} \bigg] \bigg[ \dfrac{ \sin((2m+1)\l x/2) - \sin(\l x/2)}{2\sin(\l x/2)} \bigg] + \O (n^{1+3a})  \notag \\
& = \dfrac{r^2 m^2 \l^2}{4} \bigg[ \dfrac{ \sin^2((2m+1)\l x/2) }{\sin^2(\l x/2)} -1 \bigg] + \O (n^{1+3a}).
\end{align}
Putting together \eqref{P3c13}--\eqref{P3c18}, we have
\begin{align*} 
 A_n(x)C_n(x)
& = \dfrac{m^2 \l^2 A_n(x)^2}{4}  + \l^2 \fim^2 \fimp^2 + \dfrac{r m^2 \l^3 }{4 \sin^2(\l x/2)} \\
& \quad - \dfrac{r^2 m^2 \l^2 }{4} + \dfrac{r m \l^2 \fim \fimp \cos((2m+1)\l x/2) }{ \sin(\l x/2)} \\
& \quad + \dfrac{r^2 m^2 \l^2}{4} \bigg[ 1 - \dfrac{ \sin^2((2m+1)\l x/2) }{\sin^2(\l x/2)} \bigg] + \O (n^{1+4a}) \\
&  = \dfrac{m^2 \l^2 A_n(x)^2}{4}  + \l^2 \fim^2 \fimp^2 + \dfrac{r m^2 \l^3 }{4 \sin^2(\l x/2)} \\
& \quad + \dfrac{r m \l^2 \fim \fimp \cos((2m+1)\l x/2) }{ \sin(\l x/2)} \\
& \quad -\dfrac{r^2 m^2 \l^2 \sin^2((2m+1)\l x/2) }{4\sin^2(\l x/2)} + \O (n^{1+4a}).
\end{align*}
This together with \eqref{P3c11} implies that, for $ x \in E_0 $,
\begin{align*} 
\Delta_n(x) & := A_n(x) C_n(x) - B_n(x)^2 \\
& = \dfrac{m^2 \l^2 A_n(x)^2}{4} + \dfrac{r m^2 \l^3 }{4 \sin^2(\l x/2)} 
- \dfrac{r^2 m^2 \l^2 }{4 \sin^2(\l x/2)} + \O (n^{1+4a}) \\
& = \dfrac{m^2 \l^2 }{4} \bigg[ A_n(x)^2 + \dfrac{r(\l-r)}{\sin^2(\l x/2)} \bigg] +  \O (n^{1+4a}), \asntoinfty.
\end{align*}
We also note that since $ 0< r < \l $,
\begin{align*}
0 < \dfrac{1}{A_n(x)^2 + \dfrac{r(\l-r)}{\sin^2(\l x/2)}} \leqslant \dfrac{1}{r(\l-r)}<\infty,
\end{align*}
which implies that 
\begin{align*} 
\Delta_n(x) = \dfrac{m^2 \l^2 }{4} \bigg[ A_n(x)^2 + \dfrac{r(\l-r)}{\sin^2(\l x/2)} \bigg] \big[1 + \O (n^{-1+4a})\big].
\end{align*}
Since $ a \in (0,1/4) $ and $ A_n(x)>0 $, we shall write
\begin{align*} 
\dfrac{ \sqrt{\Delta_n(x)} }{A_n(x)} =  \frac{m\l (1+ \O (n^{-1+4a}))}{2} \
\sqrt{1+  \dfrac{ r (\l-r)  }{  \sin^2(\l x/2) A_n(x)^2 }}.
\end{align*}
We also see that
\begin{align*}
& \sin(\l x/2) A_n(x) = \sin(\l x/2) [\l \fim^2 + r + 2r \fim \cos((m+1)\l  x/2)] \\
& = \dfrac{\l \sin^2(m\l x/2) + r \sin^2(\l x/2) + 2r \sin(m\l x/2) \sin(\l x/2) \cos((m+1)\l  x/2)}{\sin(\l x/2)} \\
& = \dfrac{(\l-r) \sin^2(m\l x/2) + r [\sin^2(m\l x/2) + \sin^2(\l x/2) + 2 \sin(m\l x/2) \sin(\l x/2) \cos((m+1)\l  x/2)]}{\sin(\l x/2)}.
\end{align*}
For any $ a,b \in \R $, one can easily show that
\begin{align*}
& \sin^2(a) +\sin^2(b) + 2\sin(a)\sin(b)\cos(a+b) \\
& = \sin^2(a) +\sin^2(b) - 2\sin^2(a)\sin^2(b) + 2\sin(a)\sin(b)\cos(a)\cos(b) \\
& = \sin^2(a) \cos^2(b)+\sin^2(b) \cos^2(a) + 2\sin(a)\sin(b)\cos(a)\cos(b)
= \sin^2(a+b).
\end{align*}
Therefore, the above trigonometric identity helps us write
\begin{align*}
& \sin^2(\l x/2) A_n(x)^2 = \dfrac{[(\l-r) \sin^2(m\l x/2) + r \sin^2((m+1)\l  x/2)]^2}{\sin^2(\l x/2)},
\end{align*}
which gives us, for $ x \in E_{0} $,
\begin{align*} 
\dfrac{ \sqrt{\Delta_n(x)} }{A_n(x)} =  \frac{m\l \big[1+ \O (n^{-1+4a})\big]}{2} \
\sqrt{1+  \dfrac{ r (\l-r) \sin^2(\l x/2) }{  [(\l-r) \sin^2(m\l x/2) + r \sin^2((m+1)\l  x/2)]^2}}, \asntoinfty.
\end{align*}
Thus, by the Kac-Rice formula \eqref{Kac} we obtain
\begin{align*} 
\E[N_n (E_0)] & =
\frac{m\l \big[1+ \O (n^{-1+4a})\big]}{2\pi} \displaystyle \int_{E_0 }
\sqrt{1+  \dfrac{ r (\l-r) \sin^2(\l x/2) }{  [(\l-r) \sin^2(m\l x/2) + r \sin^2((m+1)\l  x/2)]^2}} \, dx.
\end{align*}  
Furthermore, it follows from Corollary 3.2 of \cite{Pir2020} that $ \E[N_n (F_0)]  = \O (m^{1-a}) = \O (n^{1-a})$. 
Hence
\begin{align} \label{P3c19}
& \E[N_n ([0,2\pi/\l])] = \E[N_n (E_0 \cup F_0 )] = \E[N_n (E_0 )]+\E[N_n (F_0 )] \notag \\
& =\frac{m\l \big[1+ \O (n^{-1+4a})\big]}{2\pi} \displaystyle \int_{E_0}
\sqrt{1+  \dfrac{ r (\l-r) \sin^2(\l x/2) }{  [(\l-r) \sin^2(m\l x/2) + r \sin^2((m+1)\l  x/2)]^2}} \, dx + \O (n^{1-a}) \notag \\
& = \frac{m \big[1+ \O (n^{-1+4a})\big]}{\pi} \displaystyle \int_{m^{-a}}^{\pi-m^{-a}}
\sqrt{1+  \dfrac{ r (\l-r) \sin^2(x) }{  [(\l-r) \sin^2(mx) + r \sin^2(m+1)x]^2}} \, dx + \O (n^{1-a}) \notag \\
& = \frac{m + \O (n^{4a})}{\pi} \displaystyle \int_{m^{-a}}^{\pi-m^{-a}}
\sqrt{1+  \dfrac{ r (\l-r) \sin^2(x) }{  [(\l-r) \sin^2(mx) + r \sin^2(m+1)x]^2}} \, dx + \O (n^{1-a}).
\end{align}
Moreover, since $ r, \l-r \geqslant 1 $, we have 
\begin{align*} 
0 \leqslant \dfrac{ \sin(x) }{(\l-r) \sin^2(mx) + r \sin^2(m+1)x} \leqslant \dfrac{ \sin(x) }{ \sin^2(mx) +  \sin^2(m+1)x} = \dfrac{ \sin(x) }{ 1-\cos(x)  \cos(2m+1)x}.
\end{align*} 
Repeating the argument in the proof of Lemma 3.6 of \cite{Pir2021}, one can show that 
\begin{align*} 
\displaystyle \int_{\tfrac{\pi}{2(2m+1)}}^{m^{-a}} \dfrac{ \sin(x) \, dx }{ 1-\cos(x)  \cos(2m+1)x} =\O (m^{-a}),
\end{align*} 	
and similarly
\begin{align*} 
\displaystyle \int_{\pi-m^{-a}}^{\pi-\tfrac{\pi}{2(2m+1)}} \dfrac{ \sin(x) \, dx }{ 1-\cos(x)  \cos(2m+1)x} =\O (m^{-a}).
\end{align*} 
Moreover,
\begin{align*} 
\displaystyle \int_{\tfrac{\pi}{2(2m+1)}}^{\pi-\tfrac{\pi}{2(2m+1)}} \dfrac{ \sin(x) \, dx }{ 1-\cos(x)  \cos(2m+1)x} =\O (1).
\end{align*} 
Note that the last three estimates remain true if we replace $ \tfrac{\pi}{2(2m+1)} $
by $ \tfrac{\pi}{m} $. Taking into account all these facts,  we obtain
\begin{align} \label{P3c20}
& \int_{m^{-a}}^{\pi-m^{-a}}
\sqrt{1+  \dfrac{ r (\l-r) \sin^2(x) }{ [(\l-r) \sin^2(mx) + r \sin^2(m+1)x]^2}} \, dx \notag \\
& = \int_{\tfrac{\pi}{m}}^{\pi-\tfrac{\pi}{m}}
\sqrt{1+  \dfrac{ r (\l-r) \sin^2(x) }{  [(\l-r) \sin^2(mx) + r \sin^2(m+1)x]^2}} \, dx + \O (m^{-a}) =\O (1).
\end{align} 
It follows from \eqref{P3c19} and \eqref{P3c20} that
\begin{align*} 
& \E[N_n ([0,2\pi/\l])] 
= \frac{m}{\pi} \displaystyle \int_{\tfrac{\pi}{m}}^{\pi-\tfrac{\pi}{m}}
\sqrt{1+  \dfrac{ r (\l-r) \sin^2(x) }{  [(\l-r) \sin^2(mx) + r \sin^2(m+1)x]^2}} \, dx + \O (n^{4a})+ \O (n^{1-a}).
\end{align*}
Since $ a \in (0,1/4) $, it is immediate that the best estimate happens at $ a=1/5 $. 
Thus,
\begin{align*} 
& \E[N_n ([0,2\pi/\l])] 
= \frac{m}{\pi} \displaystyle \int_{\tfrac{\pi}{m}}^{\pi-\tfrac{\pi}{m}}
\sqrt{1+  \dfrac{ r (\l-r) \sin^2(x) }{  [(\l-r) \sin^2(mx) + r \sin^2(m+1)x]^2}} \, dx + \O (n^{4/5}), \asntoinfty. \qedhere
\end{align*}
\end{proof}

\belowpdfbookmark{Lemma 3.3}{P2Lem3.3}
\begin{lemma} \label{P2Lem3.3}
Let $ m \in \N $ and $ 1 \leqslant r \leqslant \l-1 $, where $ \l \in \N \setminus\{1\} $ is fixed. We define
\begin{align*}
\I_{\l,r}(m):=\frac{1}{\pi} \displaystyle \int_{\tfrac{\pi}{m}}^{\pi-\tfrac{\pi}{m}}
\sqrt{1+  \dfrac{ r (\l-r) \sin^2(x) }{  [(\l-r) \sin^2(mx) + r \sin^2(m+1)x]^2}} \, dx.
\end{align*}
Then, for sufficiently large $ m $, we have $ \I_{\l,r}(m) = \cC_{\l,r} + \O (m^{-1/5}) $.
\end{lemma}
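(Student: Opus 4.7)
The plan is to exploit the separation between the slow variable $x$ and the fast variable $mx$ in the integrand. Introduce
\[
g(s,t) := \sqrt{1+\dfrac{r(\l-r)\sin^2(s)}{[(\l-r)\sin^2(t) + r\sin^2(s+t)]^2}},
\]
which is $\pi$-periodic in $t$ since $\sin^2$ is. Because $\sin((m+1)x) = \sin(mx+x)$, the integrand of $\I_{\l,r}(m)$ equals $g(x, mx)$, and the assertion is equivalent to
\[
\frac{1}{\pi}\int_{\pi/m}^{\pi-\pi/m} g(x,mx)\, dx - \frac{1}{\pi^2}\int_0^\pi\!\!\int_0^\pi g(s,t)\, ds\, dt = \O(m^{-1/5}).
\]

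The first step is to partition the interval $[\pi/m, \pi - \pi/m]$ into the $m-2$ subintervals $I_k := [k\pi/m, (k+1)\pi/m]$, $k = 1, \dots, m-2$. On each $I_k$ the substitution $t = mx - k\pi$, together with the $\pi$-periodicity of $g$ in its second argument, yields
\[
\int_{I_k} g(x, mx)\, dx = \frac{1}{m}\int_0^\pi g\!\left(\tfrac{t+k\pi}{m},\, t\right) dt,
\]
so that the whole integral equals $\int_0^\pi\!\big[\tfrac{1}{m}\sum_{k=1}^{m-2} g\big(\tfrac{t+k\pi}{m}, t\big)\big]\, dt$. The bracketed sum is precisely $\tfrac{1}{\pi}$ times a left-endpoint Riemann sum of mesh $\pi/m$ for $\int_0^\pi g(s,t)\, ds$, with sample points ranging over $[\pi/m, \pi - \pi/m]$. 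Passing from the Riemann sum to the integral, and then integrating the result in $t$, converts the original expression into $\cC_{\l,r}$ plus lower-order terms.

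The main obstacle is controlling this Riemann-sum approximation uniformly in $t$. The denominator $(\l-r)\sin^2(t) + r\sin^2(s+t)$ of $g(s,t)$ vanishes on $[0,\pi]^2$ only at the four corners, and near any such corner the section $s \mapsto g(s,t)$ develops a sharp peak that grows in height and shrinks in width as $t$ approaches $0$ or $\pi$. Following the template of Lemmas~3.9--3.11 of \cite{Pir2021}, one fixes $a \in (0, 1/4)$, restricts to the good set $G_m := [m^{-a}, \pi - m^{-a}]$ in the $t$-variable, and treats the complementary set of measure $\O(m^{-a})$ via the pointwise bounds already derived in the proof of Lemma \ref{P2Lem3.2}, contributing an $\O(m^{-a})$ error. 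On $G_m$, after additionally excising $m^{-a}$-neighborhoods of $s \in \{0,\pi\}$, the function $s \mapsto g(s,t)$ is $C^1$ with derivative controlled by a power of $m^a$, and a standard left-endpoint Riemann-sum estimate together with the contribution of the excised $s$-neighborhoods yields an additional error of $\O(m^{4a-1})$. Combining these produces $|\I_{\l,r}(m) - \cC_{\l,r}| = \O(m^{-a}) + \O(m^{4a-1})$, which is optimized by the choice $a = 1/5$, giving the claimed $\O(m^{-1/5})$.
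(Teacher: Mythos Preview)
Your proposal is correct and follows essentially the same two-scale template as the paper: both partition $[\pi/m,\pi-\pi/m]$ into subintervals of length $\pi/m$, substitute to separate the slow variable $x$ from the fast variable $mx$, recognize a Riemann sum converging to the double integral $\cC_{\l,r}$, and handle the corner singularities by excision, deferring the hard estimates to Lemmas~3.9--3.11 of \cite{Pir2021}. The only organizational difference is that the paper first halves the interval via the symmetry $x\mapsto\pi-x$ and then \emph{freezes} the slow argument at the endpoints $\zeta_k/m$ (so the resulting Riemann nodes are fixed and Chui's rate applies directly, with the entire $\O(m^{-1/5})$ coming from the freezing step), whereas you keep the $t$-dependent sample points $(t+k\pi)/m$ and obtain the $\O(m^{-1/5})$ by balancing the excised-$t$ contribution $\O(m^{-a})$ against the Riemann-sum error $\O(m^{4a-1})$ at $a=1/5$.
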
\begin{proof}
Let us define
\begin{align*}
g_{m}(x):= \sqrt{1+  \dfrac{ r (\l-r) \sin^2(x) }{  [(\l-r) \sin^2(mx) + r \sin^2(m+1)x]^2}},
\end{align*}
and $ m':= [m/2] $.
We observe that	
\begin{align*}  
\I_{\l,r}(m)
& = \frac{1}{\pi} \int_{\tfrac{\pi}{m} }^{\pi - \tfrac{\pi}{m} } g_{m}(x) \, dx 
= \frac{2}{\pi} \int_{\tfrac{\pi}{m} }^{\tfrac{\pi}{2}} g_{m}(x) \, dx 
= \frac{2}{\pi} \int_{\tfrac{\pi}{m} }^{\tfrac{m'\pi}{m}} g_{m}(x) \, dx 
+  \frac{2}{\pi} \int_{\tfrac{m'\pi}{m} }^{\tfrac{\pi}{2}} g_{m}(x) \, dx    \\
& = \frac{2}{m \pi} \sum_{k=2}^{m'} \displaystyle \int_{ (k-1) \pi }^{k \pi  }  \sqrt{1+  \dfrac{ r (\l-r) \sin^2(t/m) }{  [(\l-r) \sin^2(t) + r \sin^2(t+t/m)]^2}} \, dt + \O (m^{-1}).
\end{align*}	
For $ 1 \leqslant k \leqslant m' $, let 
\begin{align*} 
\zk 
& := \begin{cases}
k \pi  ,  & \text{if $k$ is odd,} \\
(k-1) \pi , & \text{if $k$ is even.}
\end{cases}
\end{align*}
Then, by the technique established in Lemma 3.9 of \cite{Pir2021}, we are able to show that
\begin{align*}  
& \frac{2}{m \pi} \sum_{k=2}^{m'} \displaystyle \int_{ (k-1) \pi }^{k \pi  }  \sqrt{1+  \dfrac{ r (\l-r) \sin^2(t/m) }{  [(\l-r) \sin^2(t) + r \sin^2(t+t/m)]^2}} \, dt \\
& =  \frac{2}{m \pi} \sum_{k=2}^{m'} \displaystyle \int_{ (k-1) \pi }^{k \pi  }  \sqrt{1+  \dfrac{ r (\l-r) \sin^2(\zk/m) }{  [(\l-r) \sin^2(t) + r \sin^2(t+\zk/m)]^2}} \, dt +\O (m^{-1/5}) .
\end{align*}
It is also easy to show that
\begin{align*}  
& \frac{2}{m \pi} \displaystyle \int_{0}^{ \pi  }  \sqrt{1+  \dfrac{ r (\l-r) \sin^2(\zeta_1/m) }{  [(\l-r) \sin^2(t) + r \sin^2(t+\zeta_1/m)]^2}} \, dt = \O (m^{-1}) .
\end{align*}
Thus, putting all these estimates together, we have
\begin{align}  \label{P3c21}
\I_{\l,r}(m)
& =  \frac{2}{m \pi} \sum_{k=1}^{m'} \displaystyle \int_{ (k-1) \pi }^{k \pi  }  \sqrt{1+  \dfrac{ r (\l-r) \sin^2(\zk/m) }{  [(\l-r) \sin^2(t) + r \sin^2(t+\zk/m)]^2}} \, dt + \O (m^{-1/5}) \notag \\
& =  \frac{2}{\pi^2}  \int_{ 0 }^{ \pi  } \frac{\pi}{m} \sum_{k=1}^{m'} \sqrt{1+  \dfrac{ r (\l-r) \sin^2(\zk/m) }{  [(\l-r) \sin^2(t) + r \sin^2(t+\zk/m)]^2}} \, dt + \O (m^{-1/5}).
\end{align}
Note that the well-known result of Chui \cite{Chu1971} on the rate of convergence of Riemann sums guarantees that (Lemma 3.11 of \cite{Pir2021} discusses such a rate in detail)
\begin{align}  \label{P3c22}
& \int_{ 0 }^{ \pi  } \frac{\pi}{m} \sum_{k=1}^{m'} \sqrt{1+  \dfrac{ r (\l-r) \sin^2(\zk/m) }{  [(\l-r) \sin^2(t) + r \sin^2(t+\zk/m)]^2}} \, dt \notag \\
& = \int_{ 0 }^{ \pi  } \int_{ 0 }^{ \pi/2 } \sqrt{1+  \dfrac{ r (\l-r) \sin^2(s) }{  [(\l-r) \sin^2(t)+  r \sin^2(s+t)] ^2 }} \, ds 
\, dt 
+  \O   \bigg( \frac{\log n}{n} \bigg).
\end{align}
Now, \eqref{P3c21} and \eqref{P3c22} yield
\begin{align*}  
\I_{\l,r}(m)
& =  \frac{2}{\pi^2}  \int_{ 0 }^{ \pi  } \int_{ 0 }^{ \pi/2 } \sqrt{1+  \dfrac{ r (\l-r) \sin^2(s) }{  [(\l-r) \sin^2(t)+  r \sin^2(s+t)] ^2 }} \, ds  \, dt + \O (m^{-1/5}) \\
& =  \frac{1}{\pi^2}  \int_{ 0 }^{ \pi  } \int_{ 0 }^{ \pi } \sqrt{1+  \dfrac{ r (\l-r) \sin^2(s) }{  [(\l-r) \sin^2(t)+  r \sin^2(s+t)] ^2 }} \, ds  \, dt + \O (m^{-1/5}) \\
& = \cC_{\l,r} + \O (m^{-1/5}), \asmtoinfty,
\end{align*}
as desired.
\end{proof}
Finally, we get to the point to prove Theorem \ref{PeriodicTn-nonzero}.

\belowpdfbookmark{Proof of Theorem 2.3}{PeriodicTn-nonzero}
\begin{proof}[Proof of Theorem \ref{PeriodicTn-nonzero}]
As a direct consequence of Lemmas \ref{P2Lem3.2} and \ref{P2Lem3.3}, we have
\begin{align*} 
& \E[N_n ([0,2\pi/\l])] 
= m \I_{\l,r}(m) + \O (n^{4/5}) = m \cC_{\l,r} + \O (n^{4/5}), \asntoinfty.
\end{align*}
Similarly, we can show that $ \E[N_n ([2k\pi/\l,2(k+1)\pi/\l ])] = m \cC_{\l,r} + \O (n^{4/5}) $, $ 1 \leqslant k \leqslant \l-1 $, as $ n $ grows to infinity. Therefore,
\begin{align*}
\E [ N_n (0,2\pi) ] 
& = \sum_{k=0}^{\l -1} \E [ N_n ([2k\pi/\l,2(k+1)\pi/\l ]) ] = m \l \, \cC_{\l,r} + \O (n^{4/5}) = n \, \cC_{\l,r} + \O (n^{4/5}), \asntoinfty. \qedhere
\end{align*}
\end{proof}
Before proving Lemma \ref{boundedcKrl}, we need a lemma.
\begin{lemma}
For $ \alpha \in (0, \pi/2) $, we have
\begin{equation*}
I_{\alpha}:= \int_{ 0 }^{ \pi } \int_{ 0 }^{ \pi }
\dfrac{ \sin(s) \, ds \, dt }{  \sin^2(\alpha) \sin^2(t)+  \cos^2(\alpha) \sin^2(s+t) }  = \dfrac{\pi^2}{\sin(\alpha)\cos(\alpha)}.
\end{equation*}
\end{lemma}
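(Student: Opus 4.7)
The plan is to produce an explicit antiderivative in the inner variable so that the $t$-integral collapses to a boundary evaluation. Writing $D(s,t) := \sin^2(\alpha)\sin^2(t) + \cos^2(\alpha)\sin^2(s+t)$ for the denominator and setting
$$f(s,t) := \arctan\!\left(\frac{\cos(\alpha)\sin(s+t)}{\sin(\alpha)\sin(t)}\right),$$
the key identity I would establish (by a direct quotient/chain-rule computation) is
$$\frac{\partial f}{\partial t}(s,t) = -\frac{\sin(\alpha)\cos(\alpha)\sin(s)}{D(s,t)},$$
the only nontrivial piece being the simplification $\cos(s+t)\sin(t)-\sin(s+t)\cos(t) = -\sin(s)$ in the numerator of $f'$, together with $1+f^2 = D/(\sin^2(\alpha)\sin^2(t))$. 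This step is the crux of the whole argument; spotting the right antiderivative is the only real obstacle, but it is motivated by the observation that the numerator $\sin(s) = \sin((s+t)-t)$ is exactly what appears when one differentiates a ratio of the form $\cos(\alpha)\sin(s+t)/(\sin(\alpha)\sin(t))$.

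With the identity in hand, Fubini--Tonelli (applicable because $\sin(s)\geqslant 0$ on $(0,\pi)$ and $D(s,t)>0$ throughout the domain) allows me to swap the order of integration and then apply the fundamental theorem of calculus in $t$:
$$I_{\alpha} = -\frac{1}{\sin(\alpha)\cos(\alpha)}\int_{0}^{\pi}\Bigl[f(s,t)\Bigr]_{t=0^{+}}^{t=\pi^{-}}\, ds.$$
For each fixed $s\in(0,\pi)$, $\sin(t)>0$ on $(0,\pi)$, so $f(s,\cdot)$ is $C^{1}$ on the open interval, the only subtlety being the evaluation at the two endpoints, which I would handle as one-sided limits.

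Finally, I would compute these boundary limits. At both $t\to 0^{+}$ and $t\to\pi^{-}$ the denominator $\sin(\alpha)\sin(t)$ tends to $0^{+}$, so the argument of $\arctan$ diverges to $\pm\infty$ according to the sign of the numerator. Since $s\in(0,\pi)$ forces $\sin(s)>0$, we have $\sin(s+0)=\sin(s)>0$ while $\sin(s+\pi)=-\sin(s)<0$, hence
$$\lim_{t\to 0^{+}} f(s,t) = \tfrac{\pi}{2}, \qquad \lim_{t\to\pi^{-}} f(s,t) = -\tfrac{\pi}{2}.$$
Therefore $\bigl[f(s,t)\bigr]_{0^{+}}^{\pi^{-}} = -\pi$ independently of $s$, and
$$I_{\alpha} = -\frac{1}{\sin(\alpha)\cos(\alpha)}\int_{0}^{\pi}(-\pi)\, ds = \frac{\pi^{2}}{\sin(\alpha)\cos(\alpha)},$$
which is the claimed identity.
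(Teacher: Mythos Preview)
Your argument is correct and is in fact cleaner than the paper's. The paper first changes variables to $u=s+t$, $v=t$, then expands $\sin(u-v)=\sin u\cos v-\cos u\sin v$ and splits $I_{\alpha}=I_{\alpha,1}-I_{\alpha,2}$, computing each piece via the elementary antiderivative $\int \cos y\,(a^{2}\sin^{2}y+b^{2})^{-1}\,dy=(ab)^{-1}\arctan(a\sin y/b)$ applied once in $v$ (after a Fubini swap and a domain split) and once in $u$; the two pieces individually carry an $\alpha$-dependent term $2\pi\alpha/(\sin\alpha\cos\alpha)$ that cancels upon subtraction. You bypass all of this by observing directly that $\arctan\bigl(\cos\alpha\,\sin(s+t)/(\sin\alpha\,\sin t)\bigr)$ is a $t$-antiderivative of the full integrand (up to the constant $-\sin\alpha\cos\alpha$), which reduces the inner integral to a boundary evaluation at $t=0^{+},\pi^{-}$. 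Your route avoids the change of variables, the splitting, and the intermediate $\alpha$-dependent cancellation; the price is that one has to guess the antiderivative, but you explain convincingly how the form $\sin s=\sin((s+t)-t)$ suggests it. One cosmetic point: where you write ``$1+f^{2}=D/(\sin^{2}\alpha\,\sin^{2}t)$'' you mean $1+g^{2}$ with $g$ the argument of the arctangent, but the computation is clear.
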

\begin{proof}
Set $ v=t $ and $ u=s+t $. We use these change of variables to rewrite $ I_{\alpha} $ as
\begin{align*}
I_{\alpha} & = \int_{ 0 }^{ \pi } \int_{ v }^{ \pi+v }
\dfrac{ \sin(u-v) \, du \, dv }{  \sin^2(\alpha) \sin^2(v)+  \cos^2(\alpha) \sin^2(u) }  \\
& = \int_{ 0 }^{ \pi } \int_{ v }^{ \pi+v }
\dfrac{ \sin(u)\cos(v) \, du \, dv }{  \sin^2(\alpha) \sin^2(v)+  \cos^2(\alpha) \sin^2(u) } - \int_{ 0 }^{ \pi } \int_{ v }^{ \pi+v }
\dfrac{ \sin(v)\cos(u) \, du \, dv }{  \sin^2(\alpha) \sin^2(v)+  \cos^2(\alpha) \sin^2(u) } \\
& = : I_{\alpha,1} -I_{\alpha,2} .
\end{align*}
For $ a,b>0 $, employing change of variables $ z=\sin(y) $ shows that
\begin{align} \label{P3c23}
\dis \int \dfrac{\cos(y) \, dy}{a^2\sin^2(y)+b^2 } & = \dfrac{1}{ab} \arctan \bigg( \dfrac{a\sin(y)}{b}\bigg)+C.
\end{align}
Note that
\begin{align*}
I_{\alpha,1} & := \int_{ 0 }^{ \pi } \int_{ v }^{ \pi+v }
\dfrac{ \sin(u)\cos(v) \, du \, dv }{  \sin^2(\alpha) \sin^2(v)+  \cos^2(\alpha) \sin^2(u) } \\
& = \int_{ 0 }^{ \pi } \int_{ 0 }^{ u }
\dfrac{ \sin(u)\cos(v) \, dv \, du }{  \sin^2(\alpha) \sin^2(v)+  \cos^2(\alpha) \sin^2(u) } + \int_{ \pi }^{ 2\pi } \int_{ u-\pi }^{ \pi }
\dfrac{ \sin(u)\cos(v) \, dv \, du }{  \sin^2(\alpha) \sin^2(v)+  \cos^2(\alpha) \sin^2(u) }.
\end{align*}
We apply \eqref{P3c23}, while setting $ a=\sin(\alpha) $, $ b =  \cos(\alpha) \sin(u)$, and observe that
\begin{align*}
\int_{ 0 }^{ \pi } \int_{ 0 }^{ u }
\dfrac{ \sin(u)\cos(v) \, dv \, du }{  \sin^2(\alpha) \sin^2(v)+  \cos^2(\alpha) \sin^2(u) } 
& = \int_{ 0 }^{ \pi } \dfrac{ \sin(u) }{\sin(\alpha)\cos(\alpha) \sin(u)} \arctan \bigg( \dfrac{ \sin(\alpha)\sin(v)}{\cos(\alpha) \sin(u)}\bigg) \bigg|_{v=0}^u \, du \\
& = \int_{ 0 }^{ \pi } \dfrac{ \alpha \, du }{\sin(\alpha)\cos(\alpha)} = \dfrac{ \pi \alpha }{\sin(\alpha)\cos(\alpha)}.
\end{align*}
Similarly, we can show that
\begin{align*}
\int_{ \pi }^{ 2\pi } \int_{ u-\pi }^{ \pi }
\dfrac{ \sin(u)\cos(v) \, dv \, du }{  \sin^2(\alpha) \sin^2(v)+  \cos^2(\alpha) \sin^2(u) } = \dfrac{ \pi \alpha }{\sin(\alpha)\cos(\alpha)},
\end{align*}
which implies that 
\begin{align*}
I_{\alpha,1}  = \dfrac{2 \pi \alpha }{\sin(\alpha)\cos(\alpha)},
\end{align*}
Note that with help of \eqref{P3c23}, while setting $ a=\cos(\alpha) $, $ b =  \sin(\alpha) \sin(v)$, we have
\begin{align*}
I_{\alpha,2} & := \int_{ 0 }^{ \pi } \int_{ v }^{ \pi+v }
\dfrac{ \sin(v)\cos(u) \, du \, dv }{  \sin^2(\alpha) \sin^2(v)+  \cos^2(\alpha) \sin^2(u) } \\
& = \int_{ 0 }^{ \pi } \dfrac{ \sin(v) }{\sin(\alpha)\cos(\alpha) \sin(v)} \arctan \bigg( \dfrac{ \cos(\alpha)\sin(u)}{\sin(\alpha)\sin(v)}\bigg) \bigg|_{u=v}^{\pi+v} \, dv \\
& = -2 \int_{ 0 }^{ \pi } \dfrac{ \arctan(\cot(\alpha)) \, dv}{\sin(\alpha)\cos(\alpha) } = \dfrac{ 2\pi\alpha -\pi^2}{\sin(\alpha)\cos(\alpha) }.
\end{align*}
Therefore, 
\begin{align*}
I_{\alpha} & = I_{\alpha,1} -I_{\alpha,2} = \dfrac{ \pi^2}{\sin(\alpha)\cos(\alpha) }. \qedhere
\end{align*}
\end{proof}

\currentpdfbookmark{Proof of Lemma 2.2}{P2Lem2.2}
\begin{proof}[Proof of Lemma \ref{boundedcKrl}] \label{P2Lem2.2}
It is immediate that there exists $ \alpha_0 \in (0,\pi/2) $ such that $ \sin(\alpha_0) = \sqrt{1-r/\l} $ and $ \cos(\alpha_0) = \sqrt{r/\l} $. Therefore, for $ 0<r<\l $, by the previous lemma we have
\begin{align*}
J_{\l,r}:&=\frac{1}{\pi ^2} \displaystyle \int_{ 0 }^{ \pi } \int_{ 0 }^{ \pi }
\dfrac{ \sqrt{r (\l-r)} \sin(s) \, ds \, dt }{ (\l-r) \sin^2(t)+  r \sin^2(s+t)} \\
&= \frac{1}{\pi ^2} \displaystyle \int_{ 0 }^{ \pi } \int_{ 0 }^{ \pi }
\dfrac{ \sin(\alpha_0) \cos(\alpha_0)\sin(s) \, ds \, dt }{ \sin^2(\alpha_0) \sin^2(t)+  \cos^2(\alpha_0) \sin^2(s+t) }=1.
\end{align*}
The upper bound for $ \cC_{\l,r} $ is immediate since $ \cC_{\l,r} \leqslant 1+ J_{\l,r} =2  $.
The lower bound is also trivial by Jensen's inequality.
Define $ \Phi(y) := \sqrt{1+y^2} $, and note that $ \Phi $ is a strictly convex function on $ [0,\infty) ,$ which implies that for any nonconstant $ f \in \mathbb{L}^1  ((0,\pi))$, Jensen's inequality is strict (see Theorem 3.3 of \cite{Rud1974}), i.e.,
\begin{align*} 
& \dfrac{1}{\pi} \int_{ 0 }^{ \pi } \Phi (f(s)) \, ds > \Phi \bigg(\displaystyle \dfrac{1}{\pi} \int_{ 0 }^{ \pi } f(s) \, ds \bigg).
\end{align*} 
Let us define
\begin{equation*}
	f_{\l,r}(s,t):=\frac{ \sqrt{r (\l-r)} \sin(s)}{ (\l-r) \sin^2(t)+  r \sin^2(s+t)}, \quad (s,t) \in (0,\pi)\times (0,\pi).
\end{equation*}
Hence by Jensen's inequality we have
\begin{align*} 
\cC_{\l,r} 
& = \frac{1}{\pi ^2} \displaystyle \int_{ 0 }^{ \pi } \int_{ 0 }^{ \pi }
\sqrt{1+  f_{\l,r}^2(s,t)} \, ds \, dt \notag 
> \frac{1}{\pi } \int_{ 0 }^{ \pi } 
\sqrt{1+  \bigg[\frac{1}{\pi } \int_{ 0 }^{ \pi } f_{\l,r}(s,t) \, ds\bigg]^2}  dt \\
& \geqslant  \displaystyle  
\sqrt{1+  \bigg[\frac{1}{\pi^2 } \int_{ 0 }^{ \pi } \int_{ 0 }^{ \pi } f_{\l,r}(s,t) \, ds \, dt\bigg]^2} 
= \sqrt{1+ J_{\l,r}^2} =\sqrt{2},
\end{align*} 
which concludes the proof.
\end{proof}

\section*{Acknowledgment}
I am very grateful to Igor Pritsker for having introduced me to this project. The first part of this manuscript was one of the main contributions to the author's Ph.D. dissertation \cite{Pir-diss} under his supervision. The author would also like to thank the OSU Foundation for their financial support through the ``2020 Distinguished Graduate Fellowship", and the OSU Graduate College through the ``2020 Robberson Summer Dissertation Fellowship".

\bibliographystyle{amsplain}
\bibliography{P3references}

\end{document}